\newtheorem{thm}{Theorem}[section]
\newtheorem{cor}[thm]{Corollary}
\newtheorem{lem}[thm]{Lemma}
\newtheorem{prob}[thm]{Open problem}
\newtheorem{prop}[thm]{Proposition}
\theoremstyle{definition}
\newtheorem{defin}[thm]{Definition}
\newtheorem{rem}[thm]{Remark}
\newtheorem{exa}[thm]{Example}
\numberwithin{equation}{section}
\newcommand{\zz}[1]{}
\newcommand{\R}{\mathbb{R}}
\newcommand{\Z}{\mathbb{Z}}
\DeclareMathOperator{\Sph}{S}
\begin{document}


\baselineskip=17pt

\title[A new approach to the equivariant topological complexity]{A new approach to the equivariant topological complexity}

\author[W. Lubawski]{Wojciech Lubawski}
\address{Theoretical Computer Science Department\\ Faculty of Mathematics and Computer
Science\\ Jagiellonian University\\ 30-348 Krak\'{o}w, Poland\\
$\&$  Institute of Mathematics, Polish Academy of Science\\
ul. \'Sniadeckich 8, 00-956 Warszawa, Poland}
\email{w.lubawski@gmail.com}

\author[W. Marzantowicz]{Wac\l aw Marzantowicz$^{*}$}

\address{Faculty of Mathematics and Computer Science, Adam Mickiewicz
University of Pozna\'n, ul. Umultowska 87, 61-614 Pozna\'n, Poland}
\email{marzan@amu.edu.pl}

\thanks{$^{*}$Supported  by the Polish Research Grant NCN 2011/03/B/ST1/04533
}

\date{}

\begin{abstract}
We present a  new approach to an equivariant version of Farber's
topological complexity called invariant topological complexity. It
seems that the presented  approach is more adequate for the analysis
of impact of a symmetry on a motion planning algorithm than the one
introduced and studied by Colman and Grant. We show many bounds for
the invariant topological complexity comparing it with already known
invariants and prove that in the case of a free action it is equal
to the topological complexity of the orbit space. We define
the Whitehead version of it. {\vskip -0.5cm}
\end{abstract}

\subjclass[2010]{Primary 55M99, 55R80; Secondary 68T40, 55M30}

\keywords{LS-category, equivariant LS-category, equivariant topological complexity}

\maketitle

\; \; {\vskip -0.5cm}
\section{Introduction}
A topological invariant introduced by Farber in \cite{farber1,
farber2} and called topological complexity was the first to
estimate complexity of a motion planning algorithm. With a
configuration space $X$ of a mechanical system he associated a
natural number $TC(X)$ -- the topological complexity of $X$. To
be more precise he considered the following natural fibration
\begin{equation}\label{eq1}\pi\colon PX \rightarrow X\times X\end{equation}
from the free path space on $X$ which assigns to a path $\gamma$
defined on the unit interval its ends $(\gamma(0), \gamma(1))$. The
topological complexity is the least $n$ such that $X\times X$ can be
covered by $n$ open sets $U_1 , \ldots , U_n$ such that for each $i$
there is a homotopical section $s_i\colon U_i\rightarrow PX$ to
$\pi$.
 This invariant is a special version of a well known
Lusternik-Schnirelmann (or LS for short) category of $X\times X$ (cf.
\cite{cornea} for more detailed exposition of this notion and
other references and Lemma \ref{2:5} for the description of topological complexity in the language of LS category).

In this paper we discuss  the following question: If the mechanical
system admits a symmetry with respect to a compact Lie group (hence similar applies for the configuration space $X$) what is an
appropriate definition  of the topological complexity that takes
into account that symmetry? An answer is not that simple as it may
look like and is not unique. We define an invariant, different than the equivariant topological complexity introduced by Colman and Grant in \cite{colman-grant}, called the invariant topological complexity. By showing its properties we would like to
demonstrate that in many situations it better suits into the given frame than that of
\cite{colman-grant}.

Let $G$ be a compact Lie group and $X$ be a $G$ space (therefore we
assume that $G$ is the \emph{symmetry group} that appears in $X$). The
definition of  topological complexity uses the natural fibration
\ref{eq1}. If the space $X$ admits a $G$-action then $PX$ is a
$G$-space with the pointwise action and so does $X\times X$ via the diagonal action. It would
be natural to define the equivariant complexity by assuming that all
maps considered are $G$-maps. This approach has been studied in
\cite{colman-grant}. We will use the notation introduced there and
 denote the invariant by $TC_G(X)$. In spite of its mathematical
naturalness this approach has some disadvantages that we present
below.
\begin{figure}[htb]
\renewcommand*{\figurename}{Picture}
\centering
\includegraphics[height=5cm]{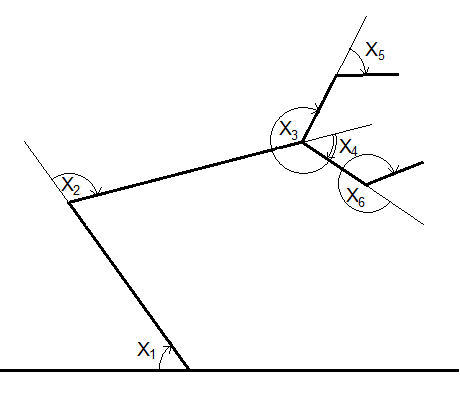}
\caption{A mechanical robot arm}
\label{fig:1}
\end{figure}

Let us consider a mechanical robot arm as shown in Picture \ref{fig:1}. We associate to it a configuration space $X:=\{ (X_1,X_2,X_3,X_4,X_5,X_6)\,|\; X_i\in \Sph^1 \}$. The mechanical robot arm admits a $G := \mathbb Z / 2=\{ 1,t \}$ symmetry as showed in Picture \ref{fig:2}. In other words the element $t$ acts as a map 
$$t\colon X\ni(X_1,X_2,X_3,X_4,X_5,X_6)\mapsto (X_1,X_2,X_4,X_3,X_6,X_5)\in X.$$ 
\begin{figure}[htb]
\renewcommand*{\figurename}{Picture}
\centering
\includegraphics[height=5cm]{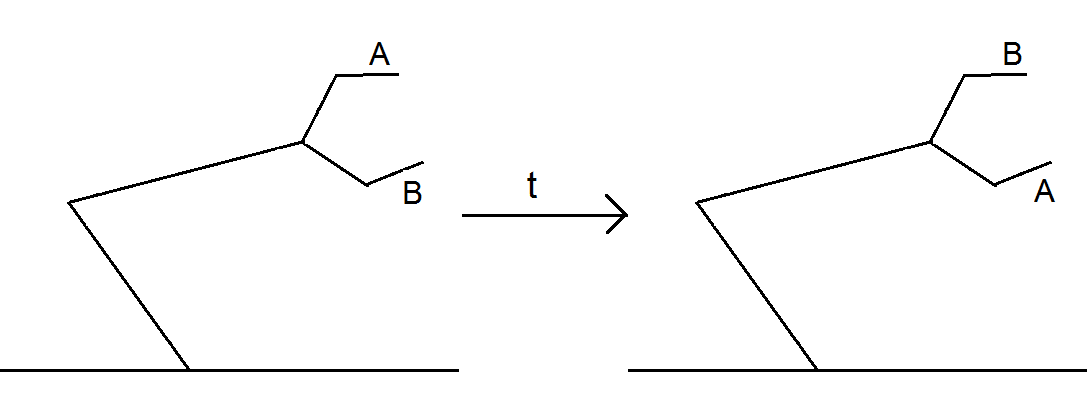}
\caption{A symmetric robot arm with an action of $t$}
\label{fig:2}
\end{figure}
exchanging the part $A$ of the arm with the part $B$.

Assume now that we are given a path $\xi$ between points $x$ and $y$ in the configuration space $X$ as presented in Picture \ref{fig:3}.
\begin{figure}[htb]
\renewcommand*{\figurename}{Picture}
\centering
\includegraphics[height=5cm]{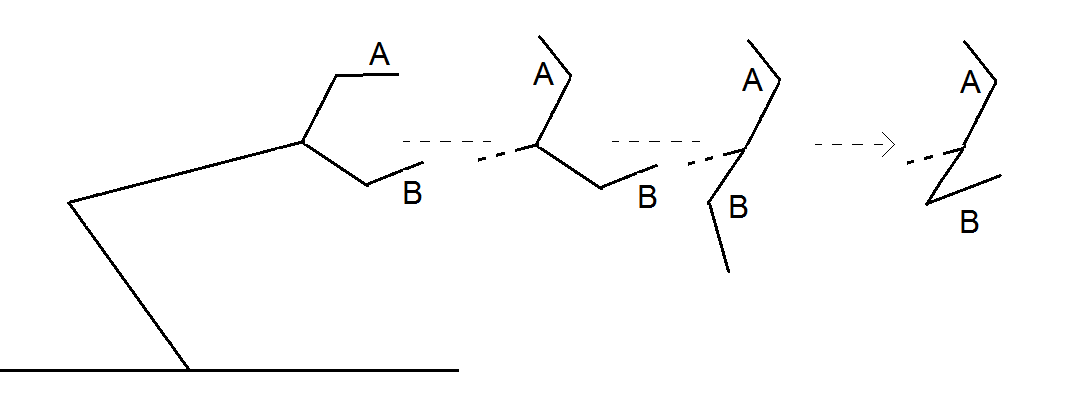}
\caption{A path $\xi$ in the configuration space between states $x$ and $y$}
\label{fig:3}
\end{figure}

Note that although points $x$ and $tx\in X$ are distinct in the configuration space there is no physical difference between these two states of a mechanical robot arm as can be observed from Picture \ref{fig:2}. Therefore if we are supposed to \emph{equivariantly} choose a path between $tx$ and $ty$ it should be equal to $t\xi$. This is the natural requirement that leads us to the definition of $TC_G(X)$.

Here we claim a stronger statement. Assume we are given a path $\xi$ between states $x$ and $y$. For such a choice in order to be \emph{equivariant} we require it determines not only a path between $tx$ and $ty$, that is $t\xi$, but also "a path" $\eta$ between $x$ and $ty$ as well as "a path" $t\eta$ between $tx$ and $y$ (we put the quotient marks to stress the fact that at this point we do not know if such choices can be realized by continuous paths). In other words we should exploit the $G\times G$-structure of the space $X\times X$. Our main problem is that usually $PX$ is not a $G\times G$-space and we will show in Section \ref{2.} how to overcome this difficulty and define invariant topological complexity denoted by $TC^G(X)$.

Surprisingly our approach seems to be better suited to serve as an equivariant version of topological complexity since it has more natural mathematical properties. For example if the group $G$ acts freely on $X$ then in general $TC_G(X)\neq TC(X/G)$ where $X / G$ is
the orbit space and $TC(X/G)$ is the topological complexity of
$X/G$. We will show that in our case $TC^G(X) = TC(X/G)$. 

A  bridge to apply advanced homotopy theory in the theory of
Lusternik-Schnirelmann category is the Whitehead version of it (cf.
\cite{whitehead1}). We will show that for the invariant topological complexity we can define a Whitehead version of it and for a finite group $G$ it gives the original invariant topological complexity. We conjecture  that the same  holds for any compact Lie group. Finally we provide examples which distinguish the equivariant topological complexity and the invariant topological complexity and calculate the latter in several cases.

Throughout this paper we assume that $X$ is a compact $G$-ANR (cf.
\cite{murayama} for the properties of $G$-ANRs). The class of
$G$-ANRs includes $G$-ENRs (cf. \cite{jaworowski} for the
definition), countable $G$-CW complexes, smooth $G$-manifolds with a smooth
action of $G$, etc.
\section{Lusternik-Schnirelmann category}

\subsection{Basic definitions}
In this section we define and give basic properties of a
version of an equivariant Lusternik-Schnirelmann category for
topological spaces that we will use later on in our considerations.
We shall use the standard notations of compact group transformation
theory as presented in \cite{bredon}.

%

\begin{defin}\label{1:1}
Let $A\subseteq X$ be a closed $G$-subset of a $G$-space $X$. An open
$G$-subset $U\subseteq X$ will be called \emph{$G$-compressible} into
$A$ whenever the inclusion map $\iota_U\colon U\subseteq X$ is
$G$-homotopic to a $G$-map $c\colon U\rightarrow X$ such that
$c(U)\subseteq A$.
\end{defin}
\noindent This allows us to define our main tool.
\begin{defin}[cf. \cite{clpu} for the nonequivariant case]\label{1:2}\label{Adfi3}
For a given $G$-subset  $A\subseteq X$ the
$A$-Lusternik-Schnirelmann $G$-category of a $G$ space $X$ is the
least $n$ such that $X$ can be covered by $U_1,\ldots , U_n$ open
$G$-subsets of $X$ each $G$-compressible into $A$. We denote in by
$\hbox{}_Acat_G(X)$.
\end{defin}

Note that this version is closely related to the standard Lusternik-Schnirelmann category as defined for example in \cite{cornea}. Recall that a $G$-space $X$ is $G$-connected if for every closed subgroup $H\subseteq G$ the space $X^H$ is path connected. 

\begin{rem}\label{1:3}
If $X$ is path connected and the action of $G$ on $X$ is trivial then
$_{\{\ast\}} cat_G(X) = cat(X)$ for every $\ast\in X$. If
$X$ is $G$-connected and $\ast\in X^G$ then $_{\{\ast\}} cat_G(X) =
cat_G(X)\,$ where $cat_G(X)$ denotes the equivariant
Lusternik-Schnirelmann category of $X$ (cf. \cite{marzan}).
\end{rem}

This version of the LS category has many properties similar to the standard LS category. We say  that a pair of $G$-spaces $(X,A)$ $G$-dominates $(Y,B)$ if  there are $G$-maps
$$f\colon (X,A)\rightarrow (Y,B)\text{ and }g\colon (Y,B)\rightarrow (X,A)$$ such that $fg\simeq id_{(Y,B)}$ in the equivariant topological category of
 pairs of $G$-spaces.

\begin{prop}\label{1:4}
If $(X,A)$ $G$-dominates $(Y,B)$ then $\hbox{}_Acat_G(X) \geqslant \hbox{}_Bcat_G(Y)$.
\end{prop}

\begin{proof}
The proof is similar to that of Lemma 1.29 in \cite{cornea} after a suitable change of categories.
\end{proof}

\subsection{The Whitehead definition of the category}

Analogously  to the classical LS category the  notion of
$G$-category defined in \ref{Adfi3}
  has its Whitehead version.

Recall that a pair of $G$-spaces $A\subseteq X$ is a closed $G$-cofibration (or the Borsuk pair) whenever $A$ is $G$-invariant and closed in $X$ and it has the equivariant homotopy extension property, i.e. for any $G$-space $Y$,
$G$-homotopy $h: A \times I \to Y$ and a $G$-map $f\colon X\rightarrow Y$ such that $h|A\times\{ 0 \} = f|A$ there is an equivariant
homotopy $H: X \times I \to Y$ extending $h\cup f$.

\begin{defin}\label{1:7}\label{2:13}\label{G-Borsuk pair} Let $A\subseteq X$ be a closed $G$-cofibration.
By a \it{fat $A$-sum} we mean for every $n\in \mathbb N$\linebreak a $G$-space
$F_A^n(X) \subseteq X^{n}:=X\times \ldots \times X$ defined as
follows:
\begin{itemize}
\item $F_A^1 := A$

\item $F_A^n(X)$ is the colimit in the category of $G$-spaces of the following diagram:
$$\xymatrix{A\times F^{n-1}_A(X)\ar[rr]\ar[d] && X\times F^{n-1}_A(X)\ar[d]\\
A\times X^{n-1}\ar[rr] && F_A^n(X)
}$$

\end{itemize}
\end{defin}

\begin{defin}\label{1:8}
We say that the $G$-Whitehead $A$-category, denoted by $\hbox{}_Acat^{Wh}_G
(X)$, is less or equal $n$ if and only if there is a $G$-map $\xi_n
\colon X\rightarrow F^n_A(X)$ such that the following diagram is homotopy
commutative:
$$
\xymatrix{
X\ar[drr]^{\Delta_n} \ar[rr]^{\xi_n} && F^n_A(X)\ar[d]^{\subseteq}\\
&& X^n}
$$
where $\Delta_n\colon X\rightarrow X^n$ is the diagonal map.
\end{defin}

\begin{thm}\label{thm1}\label{1:9}
Let $X$ be a $G$-space and $A\subseteq X$ closed
$G$-cofibration. Then \newline \centerline{$_Acat_G(X) =
\hbox{}_Acat^{Wh}_G(X)$.}
\end{thm}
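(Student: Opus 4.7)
The plan is to prove the two inequalities separately by adapting the classical Whitehead characterization of LS-category to the $A$-relative equivariant setting. The key geometric input is that because $A \subseteq X$ is a closed $G$-cofibration inside a $G$-ANR, there exists an open $G$-neighborhood $N(A) \supseteq A$ together with a $G$-equivariant strong deformation retraction $r_t\colon N(A)\to N(A)$ with $r_0 = \mathrm{id}$, $r_1(N(A)) \subseteq A$ and $r_t|_A = \mathrm{id}_A$. This converts the colimit structure of $F_A^n(X) \subseteq X^n$ into an open local structure on $X^n$ that can be pulled back to $X$ through any candidate $\xi_n$.

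For the inequality $_Acat_G(X) \geq {_Acat^{Wh}_G(X)}$, I shall start with an open $G$-cover $U_1,\ldots,U_n$ equipped with $G$-homotopies $H_i\colon U_i\times I\to X$ from $\iota_{U_i}$ to maps $c_i\colon U_i\to A$. Choose a $G$-invariant partition of unity $\{\phi_i\}$ subordinate to $\{U_i\}$ (which exists because $X$ is a compact $G$-ANR) and rescale it to $\psi_i(x) := \min(n\phi_i(x),1)$, so that $\mathrm{supp}(\psi_i) \subseteq U_i$ and, since $\sum_i \phi_i(x) = 1$, at least one $\psi_i(x)$ equals $1$ at each point. Define $\xi_n\colon X\to X^n$ coordinatewise by $(\xi_n(x))_i := H_i(x,\psi_i(x))$ on $U_i$ and by the identity outside (continuous because $H_i(x,0) = x$). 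At every $x$ some coordinate lies in $A$, so $\xi_n$ factors through the inductive pushout $F_A^n(X)$, and linearly rescaling the $\psi_i$ to zero yields the required $G$-homotopy from $\iota\circ\xi_n$ to $\Delta_n$.

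For the reverse inequality, I shall suppose $\xi_n\colon X\to F_A^n(X)$ realises ${_Acat^{Wh}_G(X)} \leq n$, and set
\[
W_i := X^{i-1} \times N(A) \times X^{n-i}, \qquad U_i := \xi_n^{-1}(W_i\cap F_A^n(X)).
\]
The $W_i$ are open $G$-subsets of $X^n$ whose intersections with $F_A^n(X)$ cover that subspace (each point has some coordinate in $A \subseteq N(A)$), so $\{U_i\}$ is an open $G$-cover of $X$. On $U_i$ the map $\mathrm{pr}_i\circ\xi_n|_{U_i}$ lands in $N(A)$ and is $G$-homotopic to $\iota_{U_i}$ through the homotopy realising $\iota\circ\xi_n \simeq_G \Delta_n$; composing with $r_t$ deforms this further into a map $U_i\to A$, which is exactly the $G$-compressibility of $U_i$ into $A$.

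The main obstacles are technical rather than conceptual: one must verify that the equivariance passes through every step, in particular that the partition of unity and the deformation-retract neighborhood $N(A)$ may be chosen $G$-invariantly, both of which reduce to the $G$-ANR machinery of Murayama cited earlier. One must also confirm that the coordinatewise formula for $\xi_n$ lifts through the colimit $F_A^n(X)$ and not merely lives in $X^n$, which is an induction on $n$ using that the natural map from $F_A^n(X)$ to $X^n$ is an embedding onto the locus of tuples with at least one coordinate in $A$ whenever $A\hookrightarrow X$ is a cofibration.
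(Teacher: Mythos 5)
Your proof is correct and follows essentially the same route as the paper: the direction from the Whitehead invariant back to a categorical covering is exactly the paper's argument (pull back the product neighborhoods $X^{i-1}\times N(A)\times X^{n-i}$ along $\xi_n$, note that every point of $F^n_A(X)$ has a coordinate in $A\subseteq N(A)$, and concatenate the coordinate homotopy $\zeta^i$ with the equivariant deformation retraction of $N(A)$ onto $A$ supplied by the cofibration). In the other direction your $G$-invariant partition of unity with the $\min(n\phi_i(x),1)$ rescaling is a repackaging of the paper's Lemma \ref{1:10}, which achieves the same cutoff via a shrunk invariant cover and an invariant Urysohn function; both rest on the same normality/metrizability of $X/G$, and the identification of $F^n_A(X)$ with the subspace of $n$-tuples having at least one coordinate in $A$, which you rightly flag, holds already because the relevant maps are closed inclusions, so the pushout topology agrees with the subspace topology.
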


\noindent Before giving the proof of the theorem (which mimics the proof of Theorem 1.55 in \cite{cornea}) we need a technical lemma:

\begin{lem}\label{1:10}
Under the assumptions of the theorem, if $\{U_i\}_{i=1}^n$ is an invariant open covering of $X$ such that for each $i$ there exists $G$-map $s_i\colon U_i\rightarrow A$ such that $G_i\colon \iota\circ s_i$
is $G$-homotopic to $(U_i\subseteq X)$ then there exist an open and invariant covering $\{V_i\}_{i=1}^n \leqslant \{U_i\}_{i=1}^n$ such that for each $i$ there exists a $G$-homotopy $H_i\colon X\times I\rightarrow X$
with $H_i(x,0) = x$ for each $x\in X$ and $H_i(x,1) = \iota\circ s_i$ for $x\in V_i$.
\end{lem}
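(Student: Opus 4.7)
The plan is to combine an equivariant shrinking of the cover with an equivariant Urysohn-type cut-off function, and then reparametrize each given local homotopy so that it extends by the identity outside its domain.

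First I would use the hypothesis that $X$ is a compact $G$-ANR, in particular a metrizable (hence normal) $G$-space, together with averaging over the Haar measure on the compact group $G$, to produce an invariant shrinking $\{V_i\}_{i=1}^n$ of $\{U_i\}_{i=1}^n$ with $\overline{V_i}\subset U_i$. Concretely, one passes to the orbit space $X/G$ (which is Hausdorff and normal since $G$ is compact), applies the classical shrinking lemma for finite open covers of normal spaces, and pulls the shrunken sets back to $X$; by construction the $V_i$ are open, invariant, and their closures sit inside $U_i$.

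Next, for each $i$ I would construct an equivariant continuous function $\phi_i\colon X\to [0,1]$ with $\phi_i\equiv 1$ on $\overline{V_i}$ and $\phi_i\equiv 0$ on $X\setminus U_i$. Such a function is obtained by applying the ordinary Urysohn lemma in the normal space $X$ to the disjoint closed invariant sets $\overline{V_i}$ and $X\setminus U_i$ and then averaging over $G$; the average is still continuous, is now invariant, and retains the prescribed boundary values since $0$ and $1$ are fixed under averaging.

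Finally, writing $\widetilde G_i\colon U_i\times I\to X$ for the given equivariant homotopy (reparametrized in time if necessary so that $\widetilde G_i(\cdot,0)$ is the inclusion $U_i\hookrightarrow X$ and $\widetilde G_i(\cdot,1)=\iota\circ s_i$), I define
\[
H_i(x,t)=
\begin{cases}
\widetilde G_i\bigl(x,\,t\,\phi_i(x)\bigr) & \text{if } x\in U_i,\\
x & \text{if } x\notin U_i.
\end{cases}
\]
Continuity across $\partial U_i$ is immediate because $\phi_i$ vanishes on $X\setminus U_i$ and $\widetilde G_i(x,0)=x$ on $U_i$, so the two cases agree near the boundary; equivariance follows from the invariance of $\phi_i$ and the equivariance of $\widetilde G_i$; and one checks directly that $H_i(x,0)=x$ for all $x\in X$ while $H_i(x,1)=\iota\circ s_i(x)$ on $V_i$, where $\phi_i\equiv 1$. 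I expect the only genuinely delicate point to be the first step, the production of an invariant shrinking with $\overline{V_i}\subset U_i$: this is where the compactness of $G$ and the $G$-ANR (and hence paracompact, normal) hypothesis on $X$ are used essentially. The remaining two steps are routine equivariant adaptations of the classical non-equivariant argument.
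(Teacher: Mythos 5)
Your first two steps (the invariant shrinking obtained through the orbit space, and the invariant Urysohn function obtained either by averaging or directly from normality of $X/G$) are fine and agree with the paper's argument. The gap is in the last step: continuity of $H_i$ at points of $\partial U_i$ is \emph{not} immediate. Your two formulas are defined on $U_i\times I$ and on $(X\setminus U_i)\times I$, and since $U_i$ is open rather than closed, the pasting lemma does not apply; you must control limits $H_i(x_n,t)=\widetilde G_i\bigl(x_n,t\phi_i(x_n)\bigr)$ as $x_n\in U_i$ tends to $x_0\in\partial U_i$. Although $t\phi_i(x_n)\to 0$, the point $(x_0,0)$ lies outside the domain of $\widetilde G_i$, and nothing forces $\widetilde G_i$ to be uniformly close to the identity at small times near $\partial U_i$. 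A concrete failure (with trivial $G$): $X=[0,1]$, $U=(0,1]$, $A=\{1\}$, $\widetilde G(x,t)=\min(1,\,x+t/x)$, which is a homotopy on $U\times I$ from the inclusion to the constant map $1$; with $\phi(x)=\min(2x,1)$ one gets $H(x,t)=\min(1,\,x+2t)$ for $0<x\le 1/2$, which tends to $\min(1,2t)\neq 0$ as $x\to 0^+$, while your formula sets $H(0,t)=0$. So the map you wrote down can genuinely be discontinuous: the defect is that $\phi_i$ vanishes only on $X\setminus U_i$ itself, not on a neighborhood of it.

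The repair is exactly the paper's device of a \emph{double} shrinking: choose invariant open sets with $V_i\subseteq\overline{V_i}\subseteq W_i\subseteq\overline{W_i}\subseteq U_i$, and take the invariant cut-off $\lambda_i$ to satisfy $\lambda_i\equiv 1$ on $\overline{V_i}$ and $\lambda_i\equiv 0$ on $X\setminus W_i$. Then define $H_i(x,t)=x$ for $x\in X\setminus W_i$ and $H_i(x,t)=\widetilde G_i\bigl(x,t\lambda_i(x)\bigr)$ for $x\in\overline{W_i}$. Now the two pieces are closed, $\overline{W_i}\times I$ lies inside the domain of $\widetilde G_i$ (so the second formula is continuous on a closed set), and on the overlap $\overline{W_i}\setminus W_i$ one has $\lambda_i=0$, so both formulas give $x$; the pasting lemma then yields continuity, and equivariance and the conditions at $t=0$ and on $V_i\times\{1\}$ follow as you argued. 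With this modification your proof coincides with the one in the paper.
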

\begin{proof}
Here $\{V_i\}_{i=1}^n \leqslant \{U_i\}_{i=1}^n$ means that for
every $1\leq i\leq n$,  $V_i \subset U_i$. By a direct argument,  we
can find invariant coverings $\{V_i\}$ and $\{W_i\}$ of $X$ such
that
$$V_i\subseteq \bar{V_i}\subseteq W_i\subseteq \bar{W_i}\subseteq
U_i.$$  Since $X$ is a $G$-ANR, $X/G$ is normal. Moreover
$(\bar{V}_i/G) \cap ((X\setminus W_i)/G)  =\emptyset$ in $X/G$.
Consequently, by normality of $X$ there exists  a $G$-invariant
continuous function $\lambda\colon X\rightarrow I$ be such that
$\lambda(\bar{V_i}) = 1$ and $\lambda (X\backslash W_i) = 0$. For
each $i$ we define the $G$-homotopy by:
$$H_i\colon X\times I\ni (x,t)\mapsto \begin{cases} x & ,x\in X\backslash W_i \\ G_i(x , t\cdot \lambda (x)) & ,x\in \bar{W_i}\end{cases}$$
\end{proof}

\begin{rem}\label{1:11}
Of course the converse implication in the above lemma also holds.
Given a family of $G$-homotopies $H_i$ with an invariant covering
$\{V_i\}$ of $X$ it is sufficient to set $s_i := H_i(- ,t)|_{V_i}$.
\end{rem}

\begin{proof}[Proof of Theorem \ref{thm1}]
If $_Acat_G(X)\leqslant n$ then we have $n$ $G$-homotopies
$H_i\colon X\times I\rightarrow X$ satisfying conditions of the
lemma \ref{1:10}. Now to show that $_Acat^{Wh}_G(X)\leqslant n$ it is
sufficient to put $$\xi_n\colon X\ni x\mapsto (H_i(x,1))_{i=1}^n\in
F^n_A.$$

Conversely, if $_Acat^{Wh}_G(X)\leqslant n$ then we are given
$\xi_n\colon X\rightarrow F^n_A(X)$ such that $\Delta$\ and
$(F^n_A(X)\subseteq X^n)\circ \xi_n$ are $G$-homotopic by a homotopy
$\zeta$. We denote by $\zeta^i$ the $i$-th coordinate of $\zeta$.
Since $A\subseteq X$ is a $G$-cofibration then there exists $N=N(A)$
an invariant and open neighborhood of $A$ in $X$ such that $A$ is a
$G$-deformation retract of $N$. Let us denote this equivariant
deformation retraction by $R$. Then $R(x,0) = x$ and $R(x,1)\in A$
for $x\in N$. Set $U_i := H ^{-1}(N,1)$. It is easy to see that
$\{U_i\}$ is an invariant open covering of $X$. Moreover setting
$$H_i\colon X\times I\ni \mapsto \begin{cases} \zeta^i(x, 2t) & ,0\leqslant t\leqslant 1/2\\ R(\zeta^i(x,1),2t-1) & ,1/2\leqslant t \leqslant 1.\end{cases}$$
we obtain the required family of $G$-homotopies with $s_i\colon U_i\rightarrow A$ equal to $H_i(-,1)|_{U_i}$.
\end{proof}

 \zz{ we need a
technical lemma:

\begin{lem}\label{1:10}
Under the assumptions of  Theorem \ref{thm1}, if $\{U_i\}_{i=1}^n$
is an invariant open covering of $X$ such that for each $i$ there
exists $G$-map $s_i\colon U_i\rightarrow A$ such that
$\sigma_i\colon \iota\circ s_i$ is $G$-homotopic to $(U_i\subseteq
X)$ then there exist an open and invariant covering $\{V_i\}_{i=1}^n
\leqslant \{U_i\}_{i=1}^n$ such that for each $i$ there exists a
$G$-homotopy $H_i\colon X\times I\rightarrow X$ with $H_i(x,0) = x$
for each $x\in X$ and $H_i(x,1) = \iota\circ s_i$ for $x\in V_i$.
\end{lem}
\noindent{\it Proof.} Here $\{V_i\}_{i=1}^n \leqslant
\{U_i\}_{i=1}^n$ means that for every $1\leq i\leq n$,  $V_i \subset
U_i$. By a direct argument, we can find invariant coverings
$\{V_i\}$ and $\{W_i\}$ of $X$ such that
$$V_i\subseteq \bar{V_i}\subseteq W_i\subseteq \bar{W_i}\subseteq
U_i.$$  Since $X$ is a $G$-ANR, $X/G$ is normal. Moreover
$(\bar{V}_i/G) \cap ((X\setminus W_i)/G)  =\emptyset$ in $X/G$.
Consequently, by normality of $X$ there exists  a $G$-invariant
continuous function $\lambda\colon X\rightarrow I$ be such that
$\lambda(\bar{V_i}) = 1$ and $\lambda (X\backslash W_i) = 0$. For
each $i$ we define the $G$-homotopy by:
$$\;\;\;\;\;\; H_i\colon X\times I\ni (x,t)\mapsto \begin{cases} x & ,x\in X\backslash W_i \\ \sigma_i(x , t\cdot \lambda (x)) & ,x\in \bar{W_i}\end{cases}\;\;\;\;\;\;\;\;\;\;\; \; \Box $$

\begin{rem}\label{1:11}
Of course the converse implication in the above lemma also holds.
Given a family of $G$-homotopies $H_i$ with an invariant covering
$\{V_i\}$ of $X$ it is sufficient to set $s_i := H_i(- ,t)|_{V_i}$.
\end{rem}

\begin{proof}[Proof of the theorem \ref{thm1}]
If $_Acat_G(X)\leqslant n$ then we have $n$ $G$-homotopies
$H_i\colon X\times I\rightarrow X$ satisfying conditions of the
lemma \ref{1:10}. Now to show that $_Acat^{Wh}_G(X)\leqslant n$ it is
sufficient to put $$\xi_n\colon X\ni x\mapsto (H_i(x,1))_{i=1}^n\in
F^n_A.$$

Conversely, if $_Acat^{Wh}_G(X)\leqslant n$ then we are given
$\xi_n\colon X\rightarrow F^n_A(X)$ such that $\Delta$\ and
$(F^n_A(X)\subseteq X^n)\circ \xi_n$ are $G$-homotopic by a homotopy
$\zeta$. We denote by $\zeta^i$ the $i$-th coordinate of $\zeta$.
Since $A\subseteq X$ is a $G$-cofibration then there exists $N=N(A)$
an invariant and open neighborhood of $A$ in $X$ such that $A$ is a
$G$-deformation retract of $N$. Let us denote this equivariant
deformation retraction by $R$. Then $R(x,0) = x$ and $R(x,1)\in A$
for $x\in N$. Set $U_i := H ^{-1}(N,1)$. It is easy to see that
$\{U_i\}$ is an invariant open covering of $X$. Moreover setting
$$H_i\colon X\times I\ni \mapsto \begin{cases} \zeta^i(x, 2t) & ,0\leqslant t\leqslant 1/2\\ R(\zeta^i(x,1),2t-1) & ,1/2\leqslant t \leqslant 1.\end{cases}$$
we obtain the required family of $G$-homotopies with $s_i\colon U_i\rightarrow A$ equal to $H_i(-,1)|_{U_i}$.
\end{proof}}

 A natural analog of the definition of $n$-connectedness  in the equivariant case is the
following:

\begin{defin}[comp. \cite{may}, Definition I.2.1]\label{G-n-connected} We call a $G$-space $X$  $G$-$n$-connected if $X^G\neq\emptyset$ and $\pi_i(X^H) = 0$ for
all $i \leqslant  n$ and all closed subgroups $H\subseteq G$. Likewise, a
$G$-pair $(X,A)$ is $n$-connected if $A^G\neq\emptyset$ and $\pi_i(X^H,A^H) = 0$ for all $i
\leqslant n$ and all closed subgroups $H\subseteq G$.
\end{defin}

The following fact is well-known in the non-equivariant case, e.g.
\cite[Proposition 4.13 and Corollary 4.16]{hatcher}.
Since we could not find any direct reference of the equivariant case we reprove the CW approximation theorem as stated in \cite[theorem XI.3.6]{may} making some minor changes.

\begin{prop}\label{Corollary 4.16} If $(X,A)$ is an $G$-$n$-connected $G$-CW pair for a discrete $G$ then
there exists a $G$-CW pair $(Z;A) \sim (X;A) \; rel\;  A$ such that all
cells of $Z \setminus A$ have dimension greater than $n$.
\end{prop}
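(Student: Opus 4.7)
The plan is to adapt the classical non-equivariant argument of \cite[Proposition 4.13]{hatcher} by exploiting the fact that, when $G$ is discrete, every equivariant cell of $X\setminus A$ has the form $G/H\times D^k$ and its attaching behavior is controlled cellwise by the non-equivariant topology of the fixed-point pairs $(X^H,A^H)$. Concretely, I would build a $G$-CW complex $Z$ by starting from $A$ and attaching equivariant cells only in dimensions strictly greater than $n$, together with an equivariant map $f\colon Z\to X$ fixing $A$ which induces an isomorphism on $\pi_i$ of every fixed-point set; equivariant Whitehead then upgrades $f$ to a $G$-homotopy equivalence rel $A$, which gives the desired conclusion.

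I would proceed by induction on the skeleton. Set $Z^{(n)}:=A$ and $f_n:=\mathrm{id}_A$. At the base step $k=n+1$, the hypothesis $\pi_i(X^H,A^H)=0$ for $i\le n$ is precisely what allows the construction to begin in dimension $n+1$: for each subgroup $H$ and each generator of $\pi_{n+1}(X^H,A^H)$ represented by $\psi\colon(D^{n+1},S^n)\to(X^H,A^H)$, attach an equivariant cell $G/H\times D^{n+1}$ along the $G$-extension of $\psi|_{S^n}\colon G/H\times S^n\to A$ and extend $f$ by the $G$-extension of $\psi$. At a generic step $k\ge n+1$, assuming $f_k^H$ induces isomorphisms on $\pi_i$ for $i<k$ and surjections for $i=k$, one kills the kernel at level $k$ by attaching $(k+1)$-cells whose equivariant attaching maps realize generators of $\ker\bigl(\pi_k((Z^{(k)})^H)\to\pi_k(X^H)\bigr)$, and one adds further cells to realize the remaining generators of $\pi_{k+1}(X^H)$. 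Passing to the colimit yields a $G$-CW pair $(Z,A)$ whose relative cells all lie in dimensions greater than $n$.

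The main obstacle is the coherence of attaching maps across orbit types: a $G/H$-cell has empty $K$-fixed locus for $K\not\subseteq H$ but must respect the structure at every $K\subseteq H$, and the homotopy-group data collected at the different subgroups need to be realized simultaneously. I would handle this by a secondary induction over the poset of conjugacy classes of isotropy subgroups, processing cells so that when a $G/H$-cell is attached, the data required at the $K$-fixed levels for $K\subseteq H$ is already present on $Z^{(k)}$; this is the strategy used in the proof of \cite[Theorem XI.3.6]{may}. Discreteness of $G$ is used precisely because the fixed-point subcomplexes $(Z^{(k)})^H$ then inherit genuine CW structures, so each step reduces to the non-equivariant construction of \cite{hatcher}. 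With this coherent family of attaching maps in place, the equivariant Whitehead theorem closes the argument and gives $(Z,A)\sim(X,A)$ rel $A$.
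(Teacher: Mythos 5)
Your proposal is correct and takes essentially the same route as the paper: both build $(Z,A)$ by starting from $A$ and inductively attaching equivariant cells $G/H\times D^q$ only in dimensions $q>n$, arranged so that the resulting $G$-map to $X$ induces isomorphisms on homotopy groups of all fixed-point sets, with the equivariant Whitehead theorem finishing the argument (the paper, following \cite[theorem XI.3.6]{may}, kills kernels via homotopy coequalizers of pairs of maps equalized by $\pi_q(\gamma_i)$ rather than attaching cells along representatives of kernel generators, a purely cosmetic difference). One small remark: the ``coherence across orbit types'' you single out as the main obstacle is automatic rather than something to be arranged in advance --- an equivariant cell $G/H\times D^{k+1}$ is attached via a single map $S^k\to (Z^{(k)})^H$, and its effect on the $K$-fixed sets for subconjugate $K$ is then determined, so no secondary induction over the poset of isotropy types is needed (and none occurs in the paper's proof).
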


\begin{proof}
We construct a family of $G$-CW complexes $A\subseteq Y_0\subseteq Y_1\subseteq Y_2\subseteq \ldots $ together with maps $\gamma_i\colon (Y_i,A)\rightarrow (X,A)$ such that $\pi_q(\gamma_i)$ is a surjection for $q>i+n$
and an isomorphism for $q\leqslant i+n$. Let $b\in A^G$. We choose a representative map $$s^q_H\colon\mathbb (I^{q+1},\partial I^{q+1})\rightarrow (X^H,A^H)$$ for each element of $\pi_q (X^H, A^H, b)$ where $q>0$ and $H$
runs over conjugacy classes of closed subgroups of $G$. Let $\tilde{Y_0}$ be the disjoint union of spaces $G/H\times I^{q}$, one for each chosen $s^q_H$ and of $A$. Let $\tilde{\gamma_0}$ be a map induced by all $s^d_H$.
For each $s^q_H\colon\mathbb (I^{q},\partial I^{q})\rightarrow (X^H,A^H)$ we identify each $x\in\partial I^{q}$ with $s^q_H(x)\in A$ in $\tilde{Y_0}$ hence obtaining
$$\gamma_0\colon Y_0:= (\bigsqcup_{s^q_H} G/H\times I^{q}) \cup_{\sqcup {s^q_H | \partial I^{q}}} A.$$
Note that $\gamma_0^H$ is an isomorphisms on $\pi_i$ for $i\leqslant n$ and a surjection for $i>n$. Indeed, surjectivity is obvious for all $i$, for injectivity for $i\leqslant n$ let $H$ be a closed subgroup of $G$. Note that
$(Y_0)^H$ is of the form $(\bigsqcup_{s^q_{H'}} I^q) \cup_{\sqcup s^q_{H'}| \partial I^q} A^H$ where the sum is taken over all $s^q_{H'}$ such that there is a $G$-map $G/H\rightarrow G/H'$. We have a cofibration
$$A^H\rightarrow (Y_0)^H\rightarrow (\bigvee_{s^q_{H'}} \ S^q)$$
which shows that $H_i((Y_0)^H, A^H)$ for $i\leqslant n$. Therefore by \cite[corollary 7.10]{whitehead} we get that $\pi_i((Y_0)^H , A^H) = 0$.

Now assume that $\gamma_i\colon (Y_i,A)\rightarrow (X,A)$ has been constructed. We choose representative maps $(f,g)$ for each pair of elements in $\pi_{q}((Y_i)^H,A^H, b)$ that are equalized by $\pi_q(\gamma_i)$
(note that then $q>i+n$). Using the cellular approximation theorem \cite[theorem 3.4]{may} we assume that $f$ and $g$ have image in the $q$ skeleton of $Y_i$. Let $(Y_{i+1},A)$ be the homotopy coequalizer
of the disjoint union of all such maps -- i.e. $(Y_{i+1},A)$ is obtained by attaching $G/H_+\wedge (I^q,\partial I^q)\times I_+$ via each chosen pair. Note that such operation does not affect $\pi_\ast(Y_i,A)$
for $\ast\leqslant d+i$ and kills the kernel of $\pi_{i+d+1} (\gamma_i)$. We define $\gamma_{i+1}$ with the use of homotopies $\gamma_if\simeq \gamma_ig$ based at $b$. Now it is enough to triangulate $Y_{i+1}$ as
a $G$-CW complex containing $Y_i$.
We set $(Z,A) = \cup_i (Y_i,A)$ and $\gamma=\cup\gamma_i\colon (Z,A)\rightarrow (Y,A)$
\end{proof}

\begin{rem}\label{pawalowski}
Note that without the assumption that the fix point set is nonempty the above theorem does not hold.
For a finite group $G$ R. Oliver \cite{oliver} showed the existence
of a $G$-space $X$ satisfying\linebreak the following  properties (we are grateful to K. Pawa{\l}owski for recalling us this example): {$X$ a $G$-CW
complex},  {for every solvable subgroup $H\subseteq G$ the set $X^H$
is contractible}, {for every non-solvable subgroup $K\subseteq G$
the set $X^K$ is empty}. In particular one can take $G =
A_5\subseteq \Sigma_5$, the group of pair symmetries on five element
set. It is well known that $A_5$ is non-solvable but every its
proper
 subgroup is solvable.

Note that for every subgroup $H\subsetneq G$ the $G$-space $X^H$ is now
contractible. If the assertion of Proposition \ref{Corollary 4.16} held
this would lead  to the existence of $0$-dimensional $G$-CW complex
$Y$, $G$-homotopy equivalent to $X$ and contractible since
$\pi_n(Y)=\pi_n(Y^e)= \pi_n(X^e)=0$ for all $n\geq 0$. But this
implies that $Y=\ast$ is a point and therefore $Y=Y^G$ which gives us
a contradiction as $X^G=\emptyset$.
\end{rem}

For a compact $G$-CW complex $X$ by $\dim_G X $  we mean the maximal
dimension  $n$ of  $G$-cells of the form  $G/H \times [0,1]^n$ that
appear in the construction of $X$. Consequently $\dim_G X = \dim
X/G$. If $G$ is discrete then of course $\dim_G X = \dim X = \dim
X/G$.

\begin{thm}\label{1:12}
Let $G$ be a finite group and  $A\subseteq X$ be a pair of
$G$-CW-complexes. If the pair $(X,A)$ is $G$-$n$-connected then
$_Acat(X)\leqslant \dim_G X / (n+1) +1$.
\end{thm}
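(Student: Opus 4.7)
The plan is to pass through the Whitehead reformulation (Theorem \ref{1:9}) and construct a $G$-factorization of the diagonal through the fat $A$-sum by equivariant cellular obstruction theory. Set $k := \lfloor \dim_G X/(n+1)\rfloor + 1$. By Theorem \ref{1:9} it is enough to exhibit a $G$-map $\xi_k\colon X\to F^k_A(X)$ such that $\iota\circ\xi_k$ is $G$-homotopic to the diagonal $\Delta_k\colon X\to X^k$, where $\iota\colon F^k_A(X)\hookrightarrow X^k$ is the inclusion. Because $\iota$ is a $G$-cofibration (being an iterated pushout of $G$-cofibrations in Definition \ref{1:7}), its $G$-HEP upgrades any such up-to-$G$-homotopy factorization to the strictly commutative diagram of Definition \ref{1:8}.

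The key input is a fixed-point connectivity estimate for the pair $(X^k, F^k_A(X))$. For a closed $H\subseteq G$, the diagonal action gives $(X^k)^H = (X^H)^k$, and since the colimit in Definition \ref{1:7} commutes with $(-)^H$, one has $(F^k_A(X))^H = F^k_{A^H}(X^H)$. Applying Proposition \ref{Corollary 4.16} with trivial group to the $n$-connected CW pair $(X^H, A^H)$ yields $(Z_H, A^H)\simeq (X^H, A^H)$ rel $A^H$ with every relative cell of dimension strictly greater than $n$. The relative cells of $((Z_H)^k, F^k_{A^H}(Z_H))$ are then $k$-fold products of such, hence have dimension at least $k(n+1)$, making this pair $(k(n+1)-1)$-connected in the cellular sense. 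Transferring through the homotopy equivalence gives
$$\pi_q\bigl((X^k)^H,(F^k_A(X))^H\bigr) = 0 \qquad\text{for every } q\leq k(n+1)-1.$$

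With this, $\xi_k$ is built by induction over the $G$-skeleta of $X$. Since $G$ is discrete, the extension problem across a single $G$-cell $G/H\times D^q$ reduces by the $G$-adjunction to an ordinary extension problem on $X^H$, whose obstruction lies in $\pi_q\bigl((X^k)^H,(F^k_A(X))^H\bigr)$. The connectivity estimate makes this vanish for $q\leq k(n+1)-1$, and the choice $k = \lfloor \dim_G X/(n+1)\rfloor + 1$ forces $k(n+1)-1\geq \dim_G X$, so every cell of $X$ is within the vanishing range. Hence $\xi_k$ extends over all of $X$ and $_Acat_G(X)\leq k\leq \dim_G X/(n+1)+1$.

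The principal obstacle is the connectivity calculation in the second paragraph: one must verify that $(-)^H$ preserves the iterated pushout of Definition \ref{1:7} and that $k$-fold products of CW pairs inherit the expected cellular structure with relative cells in dimensions at least $k(n+1)$. Both reduce to routine commutations once the formalism is unwound, after which the obstruction argument runs orbit type by orbit type exactly as in the classical non-equivariant proof bounding $\mathrm{cat}(X)$ by $\dim X/(n+1)+1$ for an $n$-connected pair.
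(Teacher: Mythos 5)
Your argument is correct in substance and reaches the stated bound, but it takes a somewhat different route from the paper. The paper's proof is more direct: it uses the equivariant CW approximation (Proposition \ref{Corollary 4.16}) to replace $(X,A)$ by a $G$-CW pair in which $X\setminus A$ has no $G$-cells of dimension $\leq n$, observes that then $F^s_A(X)$ contains the whole $(s(n+1)-1)$-skeleton of $X^s$, chooses $s$ so that $\dim_G X\leq s(n+1)-1$, and applies equivariant cellular approximation to the diagonal $\Delta_s\colon X\to X^s$ to push it into $F^s_A(X)$; Theorem \ref{1:9} then converts this into the categorical bound. You instead keep $X$ fixed, identify $(F^k_A(X))^H=F^k_{A^H}(X^H)$ and prove $\pi_q\bigl((X^H)^k,F^k_{A^H}(X^H)\bigr)=0$ for $q\leq k(n+1)-1$ using only the \emph{non-equivariant} CW approximation applied to each pair $(X^H,A^H)$ (legitimate, since $G$-$n$-connectedness is exactly fixed-point-wise $n$-connectedness and $A^H\supseteq A^G\neq\emptyset$), and then compress $\Delta_k$ cell by cell via the adjunction $\mathrm{Map}_G(G/H\times D^q,-)\cong\mathrm{Map}(D^q,(-)^H)$ valid for discrete $G$. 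This buys independence from the equivariant approximation statement, at the cost of invoking equivariant obstruction/compression machinery which the paper's cellular-approximation argument avoids; the dimension count ($k(n+1)-1\geq\dim_G X=\dim X$ for discrete $G$) is the same in both arguments, and your transfer of connectivity through the rel-$A^H$ homotopy equivalence is sound because rel-$A^H$ homotopies preserve the fat $A^H$-sum coordinatewise.

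One small correction: your parenthetical claim that the $G$-HEP of $\iota\colon F^k_A(X)\hookrightarrow X^k$ upgrades the $G$-homotopy factorization to a strictly commutative diagram is both unnecessary and false -- a strict factorization $\iota\circ\xi_k=\Delta_k$ would force $\Delta_k(X)\subseteq F^k_A(X)$, i.e. $X=A$. Definition \ref{1:8} must be read, as the paper itself does in the proof of Theorem \ref{1:9}, as requiring commutativity up to $G$-homotopy, which is exactly what your construction delivers; simply drop that sentence.
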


\begin{proof}
By Proposition \ref{Corollary 4.16} we may assume that $X\backslash
A$ has no $k$ dimensional $G$-cells for $k\leqslant n$. Then
$F^s_A(X)$ and $X$ have similar $s(n+1)-1$ skeleton. Let $s$ satisfy
$(s-1)(n+1) \leqslant \dim_G X \leqslant s(n+1)$ and using the
equivariant cellular approximation (comp. \cite[Theorem 3.4]{may})
theorem we get that the diagonal map $\Delta_s\colon X\rightarrow
X^s$ is $G$ homotopic to a $G$ cellular map $\xi\colon X\rightarrow
F^s_A(X)$.
\end{proof}

At the end of this subsection we prove a technical lemma that will
be used later on to prove the product formula for the category
-- Theorem \ref{1:5}).

\begin{lem}\label{1:15}
Let $X$ and $Y$ be $G$-spaces and $A\subseteq X$ and $B\subseteq Y$ its closed $G$-subsets. Then there is a commutative diagram
$$\xymatrix{
F^n_A(X)\times F^m_B(Y)\ar[rr]\ar[d] && F^{n+m-1}_{A\times B}(X\times Y)\ar[d]\\
X^n\times Y^m \ar[rr]^{\omega^{n,m}} && (X\times Y)^{n+m-1}}$$
such that $\omega^{n,m}\circ(\Delta_n(X)\times \Delta_m(Y)) = \Delta_{n+m-1}(X\times Y)$.
\end{lem}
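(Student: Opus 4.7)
My plan is to define an explicit continuous $G$-map $\omega^{n,m}\colon X^n\times Y^m\to(X\times Y)^{n+m-1}$, take the top arrow of the square to be the co-restriction of $\omega^{n,m}$ to $F^n_A(X)\times F^m_B(Y)$, and then verify the two required conditions.  The natural candidate is the ``staircase shuffle''
$$\omega^{n,m}\bigl((x_1,\dots,x_n),(y_1,\dots,y_m)\bigr):=\bigl((x_1,y_1),(x_2,y_1),\dots,(x_n,y_1),(x_n,y_2),\dots,(x_n,y_m)\bigr).$$
This map is continuous and $G$-equivariant componentwise, and the diagonal identity is transparent: when $x_1=\dots=x_n=x$ and $y_1=\dots=y_m=y$, each of the $n+m-1$ entries collapses to $(x,y)$, so $\omega^{n,m}\circ(\Delta_n(X)\times\Delta_m(Y))=\Delta_{n+m-1}(X\times Y)$, independently of the particular monotone path through the $n\times m$ grid that one uses.

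The substantive step is to check that $\omega^{n,m}$ sends $F^n_A(X)\times F^m_B(Y)$ into $F^{n+m-1}_{A\times B}(X\times Y)$; once this is done the top arrow is defined as the co-restriction and the square commutes automatically, since both vertical arrows are inclusions.  I would argue by induction on $n+m$, exploiting the pushout presentation
$$F^n_A(X)=(A\times X^{n-1})\cup_{A\times F^{n-1}_A(X)}(X\times F^{n-1}_A(X))$$
and its analog for $F^m_B(Y)$.  This expresses $F^n_A(X)\times F^m_B(Y)$ as a union of four pieces, on each of which one either identifies a coordinate of the shuffle that lies in $A\times B$ directly (the piece $(A\times X^{n-1})\times(B\times Y^{m-1})$ gives $(x_1,y_1)\in A\times B$, for instance) or reduces to a smaller shuffle via the inductive hypothesis applied to $\omega^{n-1,m}$ or $\omega^{n,m-1}$.

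The main obstacle will be the combinatorial bookkeeping in the cases where the only $A$-coordinate of $(x_1,\dots,x_n)$ and the only $B$-coordinate of $(y_1,\dots,y_m)$ sit at ``opposite ends'' of the shuffle; this is exactly the point where one must exploit the pushout flexibility of the fat sums.  If the plain staircase is insufficient on some piece, I would instead build $\omega^{n,m}$ inductively, gluing $\omega^{n-1,m}$ and $\omega^{n,m-1}$ along their common sub-shuffle $\omega^{n-1,m-1}$; this produces a map compatible with the pushout structures on both sides, and the verification of the fat-sum condition then follows the same induction on $n+m$ with the base cases $n=1$ or $m=1$ trivial.
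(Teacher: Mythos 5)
Your staircase map does not satisfy the key containment, and the failure is not mere bookkeeping. Take $n=m=2$ and a point with $x_1\in A$, $x_2\notin A$, $y_1\notin B$, $y_2\in B$: it lies in $F^2_A(X)\times F^2_B(Y)$, but its image $((x_1,y_1),(x_2,y_1),(x_2,y_2))$ has no coordinate in $A\times B$, hence does not lie in $F^3_{A\times B}(X\times Y)$. In fact no map whose coordinates are pairs of coordinate projections $(x_{a_k},y_{b_k})$ can work: assuming $A\subsetneq X$ and $B\subsetneq Y$, feed in tuples whose only coordinate in $A$ is the $i$-th and whose only coordinate in $B$ is the $j$-th; the fat-sum condition then forces the pair $(i,j)$ to occur among the $(a_k,b_k)$, and letting $(i,j)$ range over $\{1,\dots,n\}\times\{1,\dots,m\}$ would require $nm$ output coordinates, while only $n+m-1$ are available and $nm>n+m-1$ whenever $n,m\geqslant 2$. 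So no shuffle, along a monotone lattice path or otherwise, proves the lemma. Your fallback of gluing $\omega^{n-1,m}$ and $\omega^{n,m-1}$ along $\omega^{n-1,m-1}$ is not yet an argument: those maps are defined on different ambient products and have outputs of length $n+m-2$, and since the vertical arrows of the square are inclusions, the top arrow is forced to be the restriction of a single globally defined $\omega^{n,m}$ on $X^n\times Y^m$ satisfying the strict diagonal identity; you have not said what that global map is.

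For comparison, the paper argues by induction raising both indices at once: base cases $(n,1)$ and $(1,m)$ with $\omega^{n,1}(x_1,\dots,x_n,b)=((x_1,b),\dots,(x_n,b))$, and an inductive step presenting $F^{n+1}_A(X)\times F^{m+1}_B(Y)$ as a pushout of the two pieces $A\times B\times X^n\times Y^m$ and $X\times Y\times F^n_A(X)\times F^m_B(Y)$, with $\omega^{n+1,m+1}=\mathrm{id}_{X\times Y}\times\omega^{n,m}$, i.e.\ pairing the two newly added coordinates. Be aware that the difficulty you flagged (the $A$-coordinate and the $B$-coordinate sitting at ``opposite ends'') is exactly the weak point there as well: a point with $x_0\in A$, no other $x_i\in A$, $y_0\notin B$ and some $y_j\in B$ with $j\geqslant 1$ lies in $F^{n+1}_A(X)\times F^{m+1}_B(Y)$ but in neither displayed piece, so that pushout presentation omits the mixed pieces, and on them $\mathrm{id}_{X\times Y}\times\omega^{n,m}$ produces no coordinate in $A\times B$ (there is also an exponent slip, $F^{n+m}$ versus $F^{n+m+1}$). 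So the obstacle you identified is genuine and is the heart of the matter; any complete proof must handle the full four-piece decomposition of $F^n_A(X)\times F^m_B(Y)$ with coordinate maps that are not plain projections (using, e.g., the cofibration/ANR hypotheses), or else weaken the square to homotopy commutativity, which is all that the application in Theorem \ref{1:5} actually requires.
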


\begin{proof}
We prove the theorem inductively. If $(n,m) = (n,1)$ then our diagram is of the form
$$\xymatrix{
F^n_A(X)\times B\ar[rr]^\alpha\ar[d] && F^{n}_{A\times B}(X\times Y)\ar[d]\\
X^n\times Y \ar[rr]^\omega^{n,1} && (X\times Y)^{n}}$$ and it is
easy to see that it is commutative whenever we set $\;\; \alpha (x_1
, \ldots , x_n , b)= (x_1 , \ldots , x_n , b,\ldots , b)$.  The
condition on the diagonal is also satisfied. Similar argument prove
the statement for $(n,m) = (1,m)$. Now let us assume that
$n,m\geqslant 2$. Since in the category of $G$ CW complexes the
product of two pushouts is a pushout of products therefore we have a
pushout
$$\xymatrix{
A\times B\times F^n_A(X)\times  F^m_B(Y)\ar[rr]\ar[d] && X\times Y\times F^n_A(X)\times  F^m_B(Y)\ar[d]\\
A\times B\times X^n \times Y^m \ar[rr] && F^{n+1}_A(X) \times  F^{m+1}_B(Y)}.$$
We get a commutative diagram
\begin{center}
\resizebox{1\textwidth}{!}{$$\xymatrix{
A\times B\times F^n_A(X)\times  F^m_B(Y)\ar[rr]\ar[dd]\ar[rd] && A\times B\times F^{n+m-1}_{A\times B}(X\times Y)\ar[dd]\ar[rd] & \\
& X\times Y\times F^n_A(X)\times  F^m_B(Y)\ar[rr]\ar[dd] &&X\times Y\times F^{n+m-1}_{A\times B}(X\times Y)\ar[dd]\ar[dd]\\
A\times B\times X^n \times Y^m \ar[rr]\ar[rd] && A\times B \times (X\times Y)^{n+m-1}\ar[rd] &\\
& F_A^{n+1}(A)\times F_B^{m+1}(B)\ar@{-->}[rr]^{\eta} && F^{n+m}_{A\times B}(X\times Y)}.$$}
\end{center}
where $\eta$ is the universal map between two pushouts. The whole diagram is over the map
$$ \omega^{n+1,m+1}:= id_{X\times Y}\times \omega^{n,m}\colon X\times Y \times X^n\times Y^m\rightarrow X\times Y\times (X\times Y)^{n+m-1}$$
and the assertion follows.
\end{proof}

\subsection{Bounds for the category}

\begin{lem}\label{1:13}
Let $X$ be a $G$-set, $H\subseteq G$ closed subgroup and assume that $A\subseteq B$ are its closed invariant subsets. Then:
\begin{enumerate}[1)]
\item $_Bcat_G(X)\leqslant \hbox{}_Acat_G(X)$;
\item $_Acat_G(X)\leqslant \hbox{}_Bcat_G(X)\cdot \hbox{}_Acat_G(B)$;
\item $_{A/G}cat(X/G)\leqslant \hbox{}_Acat_G(X)$;
\item $_Acat_H(X)\leqslant \hbox{}_Acat_G(X)$;
\end{enumerate}
\end{lem}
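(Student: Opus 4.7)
The plan is to prove each of the four parts in turn, with part (2) requiring the real work and the other three being essentially formal consequences of the definitions or of Lemma \ref{1:10}.

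For part (1), I would argue directly from Definition \ref{1:2}: since $A\subseteq B$, any $G$-homotopy compressing an open $G$-set $U$ into $A$ automatically compresses $U$ into $B$. Thus any cover $\{U_i\}$ witnessing $_Acat_G(X)\leqslant n$ also witnesses $_Bcat_G(X)\leqslant n$.

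For part (2), set $m := {}_Bcat_G(X)$ and $k := {}_Acat_G(B)$. Pick an invariant open cover $\{U_i\}_{i=1}^m$ of $X$ with each $U_i$ $G$-compressible into $B$, and an invariant open cover $\{W_j\}_{j=1}^k$ of $B$ with each $W_j$ $G$-compressible (in $B$) into $A$. By Lemma \ref{1:10} I may replace $\{U_i\}$ by a shrunken invariant cover $\{V_i\}$ together with $G$-homotopies $H_i\colon X\times I\rightarrow X$ satisfying $H_i(x,0)=x$ for all $x\in X$ and $H_i(x,1)\in B$ whenever $x\in V_i$. Define
$$U_{ij} := \{x\in V_i : H_i(x,1)\in W_j\},$$
which is open and $G$-invariant. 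Covering is clear: given $x\in X$, choose $i$ with $x\in V_i$, then $H_i(x,1)\in B$ so it lies in some $W_j$, placing $x\in U_{ij}$. To compress $U_{ij}$ into $A$, I concatenate the restriction of $H_i$ to $U_{ij}\times I$ with the $G$-homotopy in $B$ compressing $W_j$ into $A$ (applied to $H_i(x,1)$). The result is a $G$-homotopy from the inclusion $U_{ij}\hookrightarrow X$ to a map with image in $A$, giving $_Acat_G(X)\leqslant mk$.

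For part (3), any invariant open $G$-cover $\{U_i\}$ of $X$ descends to the open cover $\{U_i/G\}$ of $X/G$, and any $G$-homotopy $U_i\times I\rightarrow X$ compressing $U_i$ into $A$ descends to a homotopy $(U_i/G)\times I\rightarrow X/G$ compressing $U_i/G$ into $A/G$, since everything is equivariant. For part (4), note that a $G$-invariant open set is automatically $H$-invariant for $H\subseteq G$, and any $G$-homotopy is automatically an $H$-homotopy; hence the same cover and homotopies witnessing $_Acat_G(X)\leqslant n$ witness $_Acat_H(X)\leqslant n$.

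The only nonroutine step is the construction in part (2), where the main obstacle is ensuring the two compressing homotopies can be concatenated on the same open set; this is precisely what Lemma \ref{1:10} supplies by allowing the first homotopy $H_i$ to be defined on all of $X$ (not merely on $U_i$), so that the preimage $H_i(\cdot,1)^{-1}(W_j)$ makes sense and cuts out an open refinement on which both stages apply.
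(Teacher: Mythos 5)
Your proposal is correct, and part (2) follows essentially the same route as the paper's: pull back an $A$-compressing cover of $B$ along the endpoint maps of the $B$-compressions of $X$ and compress by composing the two homotopies; the paper does this directly with the maps $s_i\colon U_i\rightarrow B$, setting $W^j_i=s_i^{-1}(V_j)$ and $r^j_i=t_j\circ s_i|_{W^j_i}$, so your appeal to Lemma \ref{1:10} to extend $H_i$ over all of $X$ is harmless but unnecessary (the preimage already makes sense on $U_i$, and Lemma \ref{1:10} is formally stated under a cofibration hypothesis not assumed in Lemma \ref{1:13}, even though its proof does not use it). Parts (1), (3) and (4) are dismissed as obvious in the paper, and your direct verifications of them are correct.
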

\begin{proof}
For the proof of point 2) let us assume that $_Bcat_G(X)\leqslant n$ and
$_Acat_G(B)\leqslant m$. Let $U_1,\ldots ,U_n$ be open invariant
subsets of $X$, each compressible by $s_i$ into $B$ and $V_1,\ldots
,V_m$ open invariant subsets of $B$, each compressible by $t_j$ into
$A$. Let $$W^j_i := s_i^{-1}(V_j).$$ We know that $\{W^j_i\}$ is an
invariant open covering of $X$. We define $r^j_i := t_j\circ s_i
|_{W^j_i}$ then it can be readily seen that $W^j_i$ is compressible
into $A$ by $r^j_i$. Since the cardinality of $\{W^j_i\}$ is $n\cdot
m$ the inequality follows. The rest of the points are obvious.
\end{proof}

The category behaves well (i.e. similar to the standard LS category) under taking products.

\begin{thm}\label{1:5}
Let $X$ and $Y$ be two $G$-spaces, $A\subseteq X$, $B\subseteq Y$ their closed $G$-subsets. Then
$$_{A\times B}cat_G(X\times Y)\leqslant \hbox{}_Acat_G(X) + \hbox{}_Bcat_G(Y) - 1$$
where on $X\times Y$ is given the diagonal action of $G$.
\end{thm}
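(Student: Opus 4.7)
The plan is to reduce the statement to the Whitehead characterization of $G$-category (Theorem \ref{1:9}) and then invoke the fat-sum comparison proved in Lemma \ref{1:15}, which has evidently been set up for precisely this purpose.

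Set $n := {}_A cat_G(X)$ and $m := {}_B cat_G(Y)$. By Theorem \ref{1:9}, there exist $G$-maps $\xi_n \colon X \to F^n_A(X)$ and $\xi_m \colon Y \to F^m_B(Y)$ whose compositions with the canonical inclusions $F^n_A(X) \hookrightarrow X^n$ and $F^m_B(Y) \hookrightarrow Y^m$ recover the diagonals $\Delta_n(X)$ and $\Delta_m(Y)$ respectively. Taking the product of these maps and post-composing with the top horizontal arrow of the diagram from Lemma \ref{1:15} produces a $G$-map
$$\xi \colon X \times Y \xrightarrow{\xi_n \times \xi_m} F^n_A(X) \times F^m_B(Y) \longrightarrow F^{n+m-1}_{A\times B}(X\times Y).$$

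The core of the argument is then a short diagram chase. Commutativity of the square in Lemma \ref{1:15} identifies the composite of $\xi$ with the inclusion $F^{n+m-1}_{A\times B}(X\times Y)\hookrightarrow (X\times Y)^{n+m-1}$ with $\omega^{n,m}\circ (\Delta_n(X)\times \Delta_m(Y))$, and the diagonal identity asserted by that same lemma identifies this composite in turn with $\Delta_{n+m-1}(X\times Y)$. Thus $\xi$ verifies the defining condition of Definition \ref{1:8} with $n+m-1$ in place of $n$, proving that $_{A\times B}cat^{Wh}_G(X\times Y) \leq n+m-1$. A second application of Theorem \ref{1:9}, now to the pair $A\times B \subseteq X\times Y$, converts this bound back into the required inequality on $_{A\times B}cat_G(X\times Y)$.

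The one point that is not purely formal is the verification that the hypotheses of the second use of Theorem \ref{1:9} are preserved by the product construction: namely, that $X\times Y$ remains a compact $G$-ANR and that $A\times B$ is a closed $G$-cofibration inside it. Both are standard closure properties in the $G$-ANR category (finite products of compact $G$-ANRs are again compact $G$-ANRs, and the product of two equivariant Borsuk pairs is again a Borsuk pair), so the only genuine content of the proof is concentrated in the diagram chase enabled by Lemma \ref{1:15}.
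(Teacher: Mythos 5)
Your proof is correct and is essentially the paper's own argument: the paper also derives the inequality from Lemma \ref{1:15} combined with the Whitehead characterization of Theorem \ref{1:9}, taking the product $\xi_n\times\xi'_m$ of the Whitehead maps, composing with the comparison map into $F^{n+m-1}_{A\times B}(X\times Y)$, and concluding from the identity $\omega^{n,m}\circ(\Delta_n(X)\times\Delta_m(Y))=\Delta_{n+m-1}(X\times Y)$. The only difference is that you make explicit the two applications of Theorem \ref{1:9} and the routine check that $A\times B\subseteq X\times Y$ is again a closed $G$-cofibration in a compact $G$-ANR, which the paper leaves implicit.
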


\begin{proof}
We prove the theorem using lemma \ref{1:15}. Note that whenever we have homotopy commutative diagrams
$$
\xymatrix{
X\ar[drr]^{\Delta_n(X)} \ar[rr]^{\xi_n} && F^n_A(X)\ar[d]^{\subseteq} & Y\ar[drr]^{\Delta_m(Y)} \ar[rr]^{\xi'_m} && F^m_B(Y)\ar[d]^{\subseteq}\\
&& X^n &&& Y^m}
$$
Then homotopy commutative is also the following diagram
$$
\xymatrix{
X\times Y \ar[drr]\ar[rr]^{\xi_n\times \xi'_m} && F^n_A(X)\times F^m_B(Y)\ar[d]^{\subseteq}\ar[rr]  && F^{n+m-1}_{A\times B} (X\times Y)\ar[d]\\
&& X^n\times Y^m \ar[rr]^{\omega^{n,m}} && (X\times Y)^{n+m-1}}
$$
Now $\omega^{n,m}\circ(\Delta_n(X)\times \Delta_m(Y)) = \Delta_{n+m-1}(X\times Y)$ which ends the proof.
\end{proof}

\begin{rem}
Note that we do not need any additional assumption for the action of
$G$ on $X$ and $Y$. In case $A$ and $B$ are singletons
and $X$ and $Y$ are $G$-connected our result is equivalent to that
obtained in \cite[Theorem 3.15]{colman-grant} nevertheless our
approach is much more general.
\end{rem}

\begin{thm}\label{1:6}
Let $X$ be a $G$-space and $Y$ be a $H$-space for Lie groups $G$ and $H$. Then for a closed $G$-subset $A\subseteq X$ and a closed $H$-subset $B\subseteq Y$ we have (where we consider $X\times Y$ as a standard $G\times H$-space)
$$_{A\times B}cat_{G\times H}(X\times Y)\leqslant \hbox{}_Acat_G(X) + \hbox{}_Bcat_H(Y) - 1.$$
\end{thm}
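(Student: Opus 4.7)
The plan is to reduce Theorem \ref{1:6} to the already-established Theorem \ref{1:5} by enlarging the acting groups. The key observation is that one may regard $X$ as a $G\times H$-space on which $H$ acts trivially, and $Y$ as a $G\times H$-space on which $G$ acts trivially; with these conventions the ``external'' product $G\times H$-action on $X\times Y$, namely $(g,h)\cdot(x,y):=(gx,hy)$, is nothing but the diagonal $G\times H$-action on the product of two $G\times H$-spaces, which is exactly the setting of Theorem \ref{1:5}.

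First I would verify that the numerical invariant is unaffected by this enlargement of the acting group: $_Acat_G(X) = \;_Acat_{G\times H}(X)$ when $H$ is declared to act trivially on $X$. Open $G$-invariant subsets of $X$ coincide with open $G\times H$-invariant ones, and a $G$-homotopy $U\times I \to X$ witnessing $G$-compressability of $U$ into $A$ is automatically a $G\times H$-homotopy, because $H$-equivariance is vacuous when $H$ acts trivially on both source and target. Conversely, any $G\times H$-homotopy is a fortiori a $G$-homotopy, so the two invariants agree. The same argument applied to $Y$ gives $_Bcat_H(Y) = \;_Bcat_{G\times H}(Y)$. The additional hypotheses used in Theorem \ref{1:5} survive as well: a $G$-cofibration $A\subseteq X$ remains a $G\times H$-cofibration once $H$ is made to act trivially on $X$, and the $G$-ANR property is preserved for the same reason, so the fat-sum construction and the Whitehead characterization transfer without modification.

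Applying Theorem \ref{1:5} to the pair of $G\times H$-spaces $X$ and $Y$ and their closed invariant subsets $A$ and $B$ then gives
$${}_{A\times B}cat_{G\times H}(X\times Y) \;\leqslant\; {}_Acat_{G\times H}(X) + {}_Bcat_{G\times H}(Y) - 1 \;=\; {}_Acat_G(X) + {}_Bcat_H(Y) - 1,$$
which is the desired bound. I do not expect a substantive obstacle, as the whole content of the theorem amounts to recognizing the external action as a diagonal action in a larger category, together with the routine verification that $_Acat$ is insensitive to adjoining a trivially-acting factor to the group.
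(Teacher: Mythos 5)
Your proposal is correct and follows essentially the same route as the paper, which also deduces Theorem \ref{1:6} from Theorem \ref{1:5} by viewing $X$ as a $G\times H$-space with trivial $H$-action and $Y$ as a $G\times H$-space with trivial $G$-action. Your explicit verification that ${}_Acat_G(X)={}_Acat_{G\times H}(X)$ when the extra factor acts trivially is a detail the paper leaves implicit, but it is the same argument.
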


\begin{proof}
Follows directly from \ref{1:5} since we can consider $X$ as a $G\times H$ space with trivial $H$ action and $Y$ as a $G\times H$ space with trivial $G$ action.
\end{proof}

We end this section with an observation which relates $H$ and
$G$-categories.

\begin{prop}\label{1:16}
Let $X$ be a $G$-space, $A\subseteq X$ its closed $G$-subset, $H$ closed subgroups of $G$ then
$$\hbox{}_{A^H}cat_H(X^H) \leqslant \hbox{}_{A}cat_G(X).$$
\end{prop}

\begin{proof}
If $U\subseteq X$ is $G$-compressible into $A$ then $V:= U\cap X^H$ is $H$-compressible into $A^H$ (which follows from the equivariant condition for the homotopy).
\end{proof}

\section{Topological robotics in presence of a symmetry}\label{2.}

\subsection{Equivariant topological complexity}

Let $X$ be a topological space with an action of a compact Lie group $G$. Consider the space of all continuous paths $s\colon I\rightarrow X$ with the compact open topology and denote it by $PX$. It admits a natural action of $G$.

Observe that the natural projection $$ \pi\colon PX\ni s\mapsto
(s(0),s(1))\in X\times X $$ is a continuous, $G$-equivariant
$G$-fibration. Whenever we talk about $X\times X$ we consider it as
a $G$-space (via the diagonal action) unless explicitly stated.

Recall that, by a motion planning algorithm on an open set
$U\subseteq X\times X$ we mean a local section $s\colon U \rightarrow PX$
of the fibration $\pi$, i.e. $\pi\circ s = (U\subseteq X\times X)$
(cf. \cite{farber1,farber2}).

\begin{defin}\label{dfi1}\label{2:2}
An equivariant motion planning algorithm on an open set $U\subseteq X\times X$ is a $G$-equivariant local section $s\colon U \rightarrow PX$
of the $G$-fibration $\pi$, i.e. $\pi\circ s =(U\subseteq X\times X)$.
\end{defin}

Recall that the topological complexity of $X$, denoted by
$TC(X)$, is the smallest $n$ such that $X\times X$ can be covered by
$U_1,\ldots ,U_n$ -- open subsets such that for each $i$ there
exists $s_i\colon U_i\rightarrow PX$ a motion planing algorithm on
$U_i$ (cf. \cite{farber1, farber2}).

\begin{defin}[\cite{colman-grant}]\label{dfi2}\label{2:3}
The equivariant topological complexity, denoted by $TC_G(X)$, of a
$G$-space $X$ is the smallest $n$ such that $X\times X$ can be
covered by $U_1,\ldots ,U_n$ -- invariant open subsets such that for
each $i$ there exists  $s_i\colon U_i\rightarrow PX$ an equivariant
motion planing algorithm on $U_i$.
\end{defin}

\begin{exa}\label{2:4}
Let $X = G := \Sph^1$ with $G$ acting by multiplication from the
left. Note that $X/G$ is trivial so that $TC(X/G) = 1$ whereas
$p\colon (\Sph^1)^I\rightarrow \Sph^1\times \Sph^1$
cannot have a section, in particular cannot have an equivariant one,
so $TC_G(X)\geqslant 2$. This shows that the topological complexity of
an orbit space and the equivariant topological complexity does not have
to coincide, even in the simplest examples of a free action.
\end{exa}
Our aim is to give a suitable definition of a motion planning
algorithm in an equivariant setting which induces an invariant motion
planning algorithm and as mentioned in the introduction have a
reasonable geometric meaning. Moreover we want this motion planning
algorithm to define a topological complexity which coincides with the
topological complexity of an orbit space for free $G$-spaces. In
order to do so we will translate it into the language of the
Lusternik-Schnirelmann category.

Recall that $\Delta_n\colon X\rightarrow X^n$ is the diagonal map.
Let us denote by $\Delta(X)$ the image of $\Delta_2$ in $X\times X$.

\begin{rem}
Let $X$ be a $G$ space. The map $\pi\colon PX\rightarrow X\times X$
is a $G$-fibration (satisfies the homotopy lifting property for $G$-maps).
\end{rem}


\begin{lem}[comp. \cite{colman-grant}]\label{2:5}
\label{lmm1} For a $G$-space $X$ the following statements are equivalent:
\begin{enumerate}[1)]

\item $TC_G(X) \leqslant n$;
\item there exist $n$ invariant open sets $U_1,\ldots ,U_n$ which cover $X\times X$ and $\bar{s_i}\colon U_i\rightarrow PX$ such that $p\circ \bar{s_i}$ is $G$-homotopic to $U_i\rightarrow X\times X$;
\item $_{\Delta (X)}cat_G (X\times X)\leqslant n$.
\end{enumerate}
\end{lem}

\begin{proof}
\noindent 1)$\Rightarrow$2) is obvious.

\noindent 2)$\Rightarrow$1). Let $s\colon U\rightarrow PX$ be such
that $H\colon p\circ s\simeq (\colon U\subseteq X\times X)$ (as
$G$-maps), where $p\colon PX\rightarrow  X\times X$. Then from the
equivariant homotopy
lifting property there exists a $G$-homotopy
$\hat{H}\colon U\times I\rightarrow PX$ such that the following
diagram is commutative:
$$\xymatrix{
U\times \{ 0\}\ar[rr]^s\ar[d]^\subseteq && PX\ar[d]^p\\
U\times I \ar[urr]^{\hat{H}} \ar[rr]^H && X\times X
}$$
now it is sufficient to set $\bar{s}(a,b):= \hat{H}(a,b;1)$.

\noindent 2)$\Leftrightarrow $3). Let $H\colon PX\times I\rightarrow PX$ be given as:
$$H(\omega; t) (s) = \omega (s(1-t))\text{ for }\omega\in PX,\, s,t\in I.$$ It is a $G$-deformation retraction  between $PX$ and $\iota (X)\subseteq PX$, where $\iota(x)\equiv x$ assigns to
 every point $x\in X$ the constant map defined by it; $\iota$ in this case is a $G$-homeomorphism onto the image.
Composing $\bar{s}\colon U\rightarrow PX$  with $p\circ H_1$ we get
$\hat{s}\colon U\rightarrow \Delta(X)$ which is $G$-homotopic in $X$
to the inclusion $U\subseteq X\times X$. On the other hand, given
$\hat{s}\colon U\rightarrow \Delta(X)$ we have $\hat{s} = \Delta_2\circ
\hat{s}'$, where $\hat{s}'\colon U\rightarrow X$ is a $G$-map.
Composing  it with $\iota$ we get $\bar{s}\colon U\rightarrow PX$
such that $p\circ \bar{s}$ is homotopic in $X\times X$ to the
inclusion $U\subseteq X\times X$. Note that $\Delta_2 = p\circ
\iota$. These processes are mutually inverse up
 to $G$-homotopy so that we proved the equivalence.
\end{proof}

\zz{Hence we obtained a characterization of topological complexity
in terms of a suitable version of LS-category.}

\subsection{Invariant topological complexity}

The main problem arising from geometric interpretation is that $PX$ is not a $G\times G$-space -- which is equivalent to the problem that $\Delta(X)$ is not a $G\times G$-subspace of $X\times X$. Fortunately the latter can be easily fixed.

For a given $G$-space  $X$ we denote by $\daleth (X)$ the saturation of $\Delta(X)$ with respect to the $G\times G$-action:
$$\daleth (X):= (G\times G)\cdot\Delta(X)\subseteq X\times X.$$
Instead of $\Delta(X)$ in the definition of equivariant topological complexity we consider $\daleth (X) \subset X\times X$ and instead of considering open subsets $G$-compressible into
 $\Delta(X)$ we consider open subsets of $X\times X$ which are $G\times G$-compressible into $\daleth(X)$.

\begin{defin}\label{dfi4}\label{2:6}
For a $G$-space $X$ we define invariant topological complexity as
$$TC^G(X) = _{\daleth(X)}cat _{G\times G} (X\times X).$$
\end{defin}

Let us state a lemma similar to \ref{lmm1} but formulated for the invariant topological complexity. For a $G$ space $X$ we consider
$$PX\times_{\daleth(X)} PX := \{ (\gamma,\delta)\in PX\times PX\colon G\cdot\gamma(1) = G\cdot \delta(0) \}$$ as a $G\times G$ space with the obvious multiplication $(g_1,g_2)\cdot (\gamma ,\delta) = (g_1\gamma ,g_2\delta)$.
Note that we have a natural $G\times G$ map $p\colon PX\times_{\daleth(X)} PX \rightarrow X\times X$ given by $p(\gamma ,\delta) = (\gamma(0), \delta (1))$.

\begin{prop}\label{fibration}
Let $X$ be a $G$ space. The map $p\colon PX\times_{\daleth(X)} PX\rightarrow X\times X$
is a $G\times G$-fibration (satisfies the homotopy lifting property for $G\times G$-maps).
\end{prop}

\noindent{\it Proof.} Note that $\{0,1\}\subseteq I$ is a closed $G\times G$-cofibration where we consider $\{0,1\}$ and $I$ as trivial $G\times G$ spaces.
 Therefore from \cite[Theorem 7.8]{whitehead}  the following map
$$\overline{p}\colon (X\times X)^{I}\rightarrow X\times X\times X\times X$$
is a $G\times G$-fibration, where $\overline{p}(f) =
(f(0),f(1))$. We also know that a projection to any of the factors in the product of two $G\times G$-spaces is a $G\times G$-fibration
and that a composition of two $G\times G$-fibrations is a $G\times
G$-fibration. Now it is enough to note that
$$\;\;\;\;\;\;\;\;\;\;\;\;\;\;\;\;\;\;\;\;\;\;\;\;\;\;\;\;\;\;\;\; p = pr_{1,4}\circ \overline{p} | _{\overline{p}^{-1}(X\times \daleth(X)\times X)}\colon \overline{p}^{-1}(X\times \daleth(X)\times X)\rightarrow X\times
X\,.\;\;\;\;\;\;\;\;
\;\;\;\;\;\;\;\;\;\;\;\;\;\;\;\;\;\;\;\;\;\;\;\;\Box$$

\begin{lem}\label{2:7}
\label{lmm2} For a $G$-space $X$ the following statements are equivalent:
\begin{enumerate}[1)]
\item $TC^G(X) \leqslant n$;
\item there exist $n$ $G\times G$ invariant open sets $U_1,\ldots ,U_n$ which cover $X\times X$ and $G\times G$
 maps $\bar{s_i}\colon U_i\rightarrow PX\times_{\daleth(X)} PX$ such that $p\circ \bar{s_i}$ is $G\times G$-homotopic to $id$ (as maps $U_i\rightarrow X\times X$);
\item $_{\daleth (X)}cat_{G\times G} (X\times X)\leqslant n$.
\end{enumerate}
\end{lem}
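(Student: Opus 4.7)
The plan is to proceed exactly in parallel with the proof of Lemma \ref{2:5}, substituting $\daleth(X)$ for $\Delta(X)$ and the pullback $PX \times_{\daleth(X)} PX$ for $PX$. The equivalence $1) \Leftrightarrow 3)$ is a tautology, since $STC_G(X)$ is defined (Definition \ref{dfi4}) to equal $_{\daleth(X)}cat_{G \times G}(X \times X)$. All the geometric content therefore lies in establishing $2) \Leftrightarrow 3)$.

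The key ingredient is a $G \times G$-equivariant strong deformation retraction of $PX \times_{\daleth(X)} PX$ onto the subspace of pairs of constant paths, which is $G \times G$-homeomorphic to $\daleth(X)$. A natural choice is
$$H\bigl((\gamma, \delta), t\bigr)(s) = \bigl(\gamma\bigl((1-t)s + t\bigr),\, \delta\bigl((1-t)s\bigr)\bigr),$$
which contracts $\gamma$ toward the constant path at $\gamma(1)$ and $\delta$ toward the constant path at $\delta(0)$. The crucial observation is that the two endpoints involved in the pullback constraint, namely $\gamma(1)$ and $\delta(0)$, are fixed throughout, so the condition $G \cdot \gamma_t(1) = G \cdot \delta_t(0)$ persists for every $t$, and hence $H$ stays inside $PX \times_{\daleth(X)} PX$. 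Equivariance under the coordinate-wise $G \times G$-action is immediate. Let $\iota\colon \daleth(X) \to PX \times_{\daleth(X)} PX$ send $(a, b)$ to the pair of constant paths at $a$ and $b$; it is a $G \times G$-homeomorphism onto $H_1(PX \times_{\daleth(X)} PX)$ and satisfies $p \circ \iota = id_{\daleth(X)}$.

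With $H$ and $\iota$ in hand, the two remaining implications are formal. For $2) \Rightarrow 3)$, given $\bar s_i$ with $p \circ \bar s_i$ $G \times G$-homotopic to the inclusion, the map $c_i := p \circ H_1 \circ \bar s_i$ takes values in $\daleth(X)$ and is $G \times G$-homotopic to $p \circ \bar s_i$ via $p \circ H_t \circ \bar s_i$, hence to the inclusion $U_i \subseteq X \times X$ by transitivity; this witnesses the $G \times G$-compressibility of $U_i$ into $\daleth(X)$. For $3) \Rightarrow 2)$, set $\bar s_i := \iota \circ c_i$, so that $p \circ \bar s_i = c_i$ is $G \times G$-homotopic to the inclusion by hypothesis.

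The only real obstacle is the bookkeeping verification that $H$ does not leave the pullback; this succeeds precisely because the endpoints constrained by $\daleth$ are exactly the ones fixed by the retraction, while any other natural contraction (for instance shrinking to $\gamma(0)$ and $\delta(1)$) would immediately violate the fibre-product condition. No appeal to the equivariant homotopy lifting property of $p$ recorded in the preceding remark is strictly necessary, although the HLP could be used, in the style of the $2) \Rightarrow 1)$ step of Lemma \ref{2:5}, to promote a homotopy section to a strict one.
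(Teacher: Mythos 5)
Your argument is correct. The core of it coincides with the paper's: your deformation retraction $H$ is (up to renaming the parameter) exactly the one used there, and the key point you isolate --- that $\gamma(1)$ and $\delta(0)$ are fixed throughout, so the condition $G\cdot\gamma(1)=G\cdot\delta(0)$ defining $PX\times_{\daleth(X)}PX$ is preserved and the retraction lands on constant-path pairs over $\daleth(X)$ --- is precisely the check that makes the implication 2)$\Rightarrow$3) work, and your composition $p\circ H_t\circ\bar s_i$ is the paper's argument verbatim. Where you genuinely diverge is 3)$\Rightarrow$2): the paper lifts the compression homotopy through the $G\times G$-fibration $p$ using the equivariant homotopy lifting property (starting from the pair of constant paths, which lies in the pullback only over points of $\daleth(X)$, so the lift must be run from the compressed end of the homotopy --- a wrinkle your route avoids entirely), whereas you simply set $\bar s_i=\iota\circ c_i$ and use $p\circ\iota=\mathrm{id}_{\daleth(X)}$, so that $p\circ\bar s_i=c_i$ is $G\times G$-homotopic to the inclusion by hypothesis. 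Since statement 2) only asks for a homotopy section, your shortcut is perfectly adequate and more elementary; what the paper's HLP argument buys in exchange is a genuine lift of the inclusion, i.e.\ honest $G\times G$-equivariant symmetric motion planning sections with $p\circ\bar s_i$ equal (not merely homotopic) to the inclusion, the analogue of the 2)$\Rightarrow$1) upgrade in Lemma \ref{lmm1}, which is the geometrically meaningful form of the invariant. You note this possibility yourself, so nothing is missing.
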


\begin{proof}
\noindent 1) $\Leftrightarrow$ 3) by the definition. \noindent 3) $\Rightarrow$ 2). Let $\iota_U\colon U\subseteq X\times X$ be an open $G\times G$ invariant subset that in $G\times G$ compressible into $\daleth(X)$. Let $H\colon \iota_U\simeq c$ be a $G\times G$
homotopy where $c(U)\subseteq \daleth(X)$. From the equivariant homotopy lifting property we get that
$$\xymatrix{
U\times \{ 0\}\ar[rr]^s\ar[d]^\subseteq && PX\times_{\daleth(X)} PX\ar[d]^p\\
U\times I \ar[urr]^{\hat{H}} \ar[rr]^H && X\times X
}$$
where $s(u_1,u_2) = (c_{u_1},c_{u_2})$ and $c_u$ is the constant path equal to $u$. Now it is enough to set $s_i := \hat{H}(-,1)$.

\noindent 2) $\Rightarrow $ 3). Let $H\colon PX\times_{\daleth(X)} PX\times I\rightarrow PX\times_{\daleth(X)} PX$ be given as:
$$H(\gamma, \delta,t)(\tau,\tau ') = (\gamma (\tau + t(1-\tau)), \delta((1-t)\tau ) ).$$
It is a $G\times G$ deformation retraction between $PX\times_{\daleth(X)} PX$ and $\iota(\daleth(X))\subseteq PX\times_{\daleth(X)} PX$ where $\iota$ assigns to $(u_1,u_2)$ the constant maps defined by it.
 Then if $s_i\colon U\rightarrow X$ is a $G\times G$ map such that $F\colon p\circ s_i\simeq id_ U$ for a $G\times G$ homotopy $F$ then $U$ is $G\times G$ compressible into $\daleth (X)$ as
$$id_ U\simeq p\circ s_i \sim p\circ id_{PX\times_{\daleth(X)PX}} \circ s_1\simeq p\circ H(-,1)\circ s_i$$
and $H(-,1)\circ s_i\colon U\rightarrow \daleth(X)$.
\end{proof}

\begin{rem}\label{2:8}
For a $G$-space $X$ we have inequality $TC(X/G)\leqslant TC^G(X)$.
\end{rem}
One of our main requirements was that our version of equivariant topological complexity of $X$ should be equal to the topological complexity of the orbit space $X/G$. The invariant topological complexity satisfies this condition.

\begin{thm}\label{2:9}
\label{thm0}
Let $X$ be a free $G$-space.  Then $TC(X/G) = TC^G(X)$.
\end{thm}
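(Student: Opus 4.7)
The plan is to prove two inequalities. The direction $TC(X/G)\leqslant STC_G(X)$ is Remark \ref{2:8}: a $(G\times G)$-invariant open cover of $X\times X$ admitting $(G\times G)$-equivariant compressions into $\daleth(X)$ descends to an open cover of $X/G\times X/G$ by sets compressible into the image $\daleth(X)/(G\times G)$, which an elementary orbit computation identifies with $\Delta(X/G)$. The content of the theorem is therefore the reverse inequality $STC_G(X)\leqslant TC(X/G)$.

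Two structural ingredients drive that direction. First, freeness of the $G$-action combined with compactness of $G$ makes $q\colon X\to X/G$ a principal $G$-bundle, and consequently $q\times q\colon X\times X\to X/G\times X/G$ is a principal $(G\times G)$-bundle. Second, a direct unwinding of definitions gives
$$(q\times q)^{-1}(\Delta(X/G))=\{(x,y):y\in G\cdot x\}=(G\times G)\cdot\Delta(X)=\daleth(X).$$

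Assume now $TC(X/G)\leqslant n$, and via Lemma \ref{2:5} specialized to the trivial group pick an open cover $\bar U_1,\dots,\bar U_n$ of $X/G\times X/G$ together with homotopies $\bar h_i\colon\bar U_i\times I\to X/G\times X/G$ satisfying $\bar h_i(-,0)=\iota_{\bar U_i}$ and $\bar h_i(\bar U_i,1)\subseteq\Delta(X/G)$. Set $U_i=(q\times q)^{-1}(\bar U_i)$; these form a $(G\times G)$-invariant open cover of $X\times X$. The heart of the argument is to lift each composite $\bar h_i\circ((q\times q)|_{U_i}\times\mathrm{id}_I)\colon U_i\times I\to X/G\times X/G$ to a $(G\times G)$-equivariant homotopy $H_i\colon U_i\times I\to X\times X$ starting at the inclusion $\iota_{U_i}$. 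Granting such a lift, $H_i(U_i,1)$ lies in $(q\times q)^{-1}(\Delta(X/G))=\daleth(X)$, so $U_i$ is $(G\times G)$-compressible into $\daleth(X)$, and therefore $STC_G(X)\leqslant n$.

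The main obstacle is the equivariant lifting step. Principal $(G\times G)$-bundles over paracompact bases are $(G\times G)$-equivariant Hurewicz fibrations: on a local trivialization $V\times(G\times G)\to V$, a homotopy in $V$ lifts equivariantly by keeping the group coordinate frozen in time, and the resulting local equivariant lifts glue via an invariant partition of unity on $X\times X$ pulled back from one on $X/G\times X/G$ subordinate to a trivializing cover (which exists since the orbit space of a compact $G$-ANR is a paracompact ANR). I would either invoke this as a known fact about equivariant principal-bundle lifts or spell out the gluing directly; in either case the rest of the proof is the bookkeeping described above.
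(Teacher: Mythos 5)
Your proposal is correct and follows the same overall skeleton as the paper's proof: pull the open cover of $X/G\times X/G$ back along $q\times q$, lift the compressing homotopies to $(G\times G)$-equivariant homotopies starting at the inclusions, and use the identification $(q\times q)^{-1}(\Delta(X/G))=\daleth(X)$ to conclude that the time-one maps land in $\daleth(X)$; the easy inequality $TC(X/G)\leqslant STC_G(X)$ is Remark \ref{2:8}, exactly as you say. The one genuine difference is how the equivariant lift is obtained: the paper simply quotes Palais's Covering Homotopy Theorem (Theorem \ref{2:10}), noting that freeness of the action makes every homotopy on the orbit space automatically orbit-structure preserving, whereas you rebuild the lift from the principal $(G\times G)$-bundle structure of $q\times q$ (Gleason's local cross-section theorem for a compact Lie group acting freely on the metrizable space $X$, paracompactness of open subsets of the compact metrizable orbit space, and the covering homotopy property of numerable principal bundles). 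In the free case these are equivalent, so your route essentially reproves the special case of Palais's theorem that the paper invokes; it buys self-containedness at the price of justifying the equivariant lifting property yourself. One small caution on that step: local equivariant lifts cannot literally be glued by a partition of unity; the standard argument is rather that the pullback of the bundle $X\times X\to X/G\times X/G$ along $\bar h_i$ is isomorphic, rel time zero, to the product bundle $U_i\times I$ over $\bar U_i\times I$, and composing this isomorphism with the canonical bundle map to $X\times X$ yields the desired $(G\times G)$-equivariant homotopy starting at the inclusion. With that lifting fact either cited or proved along these lines, your argument is complete.
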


\zz{Let us recall the Covering Homotopy Theorem of Palais (cf.
\cite{bredon}):  {\it Let $G$ be a compact Lie group, $X$, $Y$
$G$-spaces, $f\colon X\rightarrow Y$ a $G$-equivariant map. Denote
by $f'\colon X/G\rightarrow Y/G$ the map induced by $f$. Let
$F'\colon X/G\times I\rightarrow Y/G$ be a homotopy which preserves
the orbit structure and starts at $f'$. Then there exists an
equivariant homotopy $X\times I\rightarrow Y$ covering $F'$ starting
at $f$.}\label{2:10}}

\begin{proof}
Assume that $TC(X/G)\leqslant n$ then there exists a $G\times G$ invariant covering $U_1,\ldots ,U_n$ of $X\times X$ and $s_i\colon U_i\rightarrow \Delta(X/G)$ such that $s_i$ is homotopic to
 $U_i\subseteq X$ via the homotopy $H\colon U_i\times I\rightarrow X\times X$ (we assume it starts at the identity). Since the action of $G\times G$ is free on $X\times X$,
 the homotopy $H$ preserves the orbit structure. Hence from the  Covering Homotopy Theorem of Palais (cf. \cite{bredon}) we get a $G\times G$-equivariant homotopy $\tilde{H}\colon U_i \times I\rightarrow X\times X$
 starting at $U_i\subseteq X\times X$. For the orbit map $\pi\colon X\times X\rightarrow X/G\times X/G$ we have $\pi^{-1}(\Delta(X/G)) = \daleth(X)$ hence $G\times G$-map
 $\tilde{s_i}\colon U_i\rightarrow \daleth(X)$ can be defined by the formula $\tilde{s_i}(z) = \tilde{H}(z,1)$.
\end{proof}

As a direct consequence of  computation of the topological
complexity of real projective space by Farber, Tabachnikov and
Yuzvinsky \cite{farber-tab-yuzv} we get the following
\begin{cor}\label{symmetric complexity for projective space}

If $n\neq 1,3,7$ then $TC^{\Z / 2}(\Sph^n)$, where $\Z /2$ acts on $\Sph^n$ by the origin symmetry, is equal to the smallest
$k$ for which $\R P^n$ admits an immersion in $\R^{k-1}$.
\end{cor}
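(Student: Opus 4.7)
The plan is a direct two-step reduction. The antipodal action of $\mathbb{Z}_2$ on $S^n$ is free with orbit space $\mathbb{R}P^n$, so the corollary should follow by combining the equality $STC_G(X) = TC(X/G)$ for free actions with the known identification of $TC(\mathbb{R}P^n)$ with an immersion dimension.

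First, I would verify the hypotheses of Theorem \ref{thm0}. The sphere $S^n$ with $G=\mathbb{Z}_2$ acting antipodally is a free $G$-space (the antipodal map has no fixed points), $S^n$ is a smooth $G$-manifold, hence a $G$-ANR, so we are in the setting where the theorem applies. The orbit space is $S^n/\mathbb{Z}_2 = \mathbb{R}P^n$. Applying Theorem \ref{thm0} then gives
$$STC_{\mathbb{Z}_2}(S^n) \;=\; TC(S^n/\mathbb{Z}_2) \;=\; TC(\mathbb{R}P^n).$$

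Second, I would invoke the main result of Farber, Tabachnikov and Yuzvinsky from \cite{farber-tab-yuzv}: for $n \neq 1, 3, 7$, $TC(\mathbb{R}P^n)$ equals the smallest integer $k$ such that $\mathbb{R}P^n$ admits an immersion into $\mathbb{R}^{k-1}$ (the exceptional dimensions correspond to parallelizability of $S^n$, where $TC(\mathbb{R}P^n) = n+1$ and a separate argument is needed). Combining the two displayed equalities yields the corollary.

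There is no real obstacle here — the only substantive content is Theorem \ref{thm0}, which has already been established in the excerpt, and the deep immersion-theoretic computation of \cite{farber-tab-yuzv}, which is cited as a black box. The role of the hypothesis $n \neq 1, 3, 7$ is exactly to stay within the range where the Farber–Tabachnikov–Yuzvinsky identification holds. I would make this dependence explicit in the proof so the reader sees why the exceptional dimensions must be excluded.
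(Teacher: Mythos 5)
Your proposal is correct and is essentially the paper's own argument: the corollary is stated there as a direct consequence of Theorem \ref{thm0} applied to the free antipodal $\Z_2$-action (giving $STC_{\Z_2}(S^n)=TC(\R P^n)$) combined with the Farber--Tabachnikov--Yuzvinsky identification of $TC(\R P^n)$ with the immersion dimension for $n\neq 1,3,7$. Your explicit verification of the hypotheses and of the role of the excluded dimensions only makes the implicit proof more transparent.
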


\subsection{Whitehead invariant topological complexity}

From the classical theory (cf. \cite{dugundji} for the
non-equivariant case, \cite{lewis} for short explanation how to pass
to the equivariant one) we get that for a $G$-CW complex $X$ the map
$$\Delta(X)\subseteq X\times X$$
 is a closed $G$-cofibration. Nevertheless the
case of $$\daleth(X)\subseteq X\times X$$ and a question if it is a
$G\times G$-cofibration is much more complicated. We do not know the
answer for a general compact Lie group $G$. Here we will prove it
for a finite $G$.

\begin{cor}\label{2:11}
From the theorem \ref{thm1}, lemma \ref{lmm1} and the remark above we get that
$$TC_G(X) = _{\Delta(X)}cat_G^{Wh}(X\times X)$$
In particular for the classical topological complexity we get that
$$TC(X) = _{\Delta(X)}cat(X\times X) = _{\Delta(X)}cat^{Wh}(X\times X).$$
Moreover if $\daleth(X)\subseteq X\times X$ is a closed $G\times G$-cofibration then
$$TC^G(X) = _{\daleth(X)}cat_G(X\times X) = _{\daleth(X)}cat_{G\times G}^{Wh}(X\times X).$$
\end{cor}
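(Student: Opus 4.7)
The plan is to read all three equalities as immediate consequences of Theorem \ref{thm1}, once one chooses the correct group, ambient space, and closed invariant subset. Theorem \ref{thm1} is the master statement: whenever $A\subseteq X$ is a closed $G$-cofibration with $X$ a $G$-ANR, the open-cover category ${}_Acat_G(X)$ coincides with the Whitehead category ${}_Acat_G^{Wh}(X)$. The corollary then follows by feeding three different triples $(G,X,A)$ into this principle and combining with the characterizations of $TC_G$ and $STC_G$ in terms of ${}_Acat$ already established.

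For the first equality I would take the $G$-space to be $X\times X$ with the diagonal $G$-action and the closed invariant subset to be $\Delta(X)$. The remark cited just before the corollary guarantees that for a $G$-CW complex $X$ the diagonal $\Delta(X)\subseteq X\times X$ is a closed $G$-cofibration, so Theorem \ref{thm1} applies and gives ${}_{\Delta(X)}cat_G(X\times X)={}_{\Delta(X)}cat_G^{Wh}(X\times X)$. Lemma \ref{lmm1} (equivalence of (1) and (3)) identifies the left-hand side with $TC_G(X)$. Specializing this whole argument to the trivial group $G=e$ yields the classical statement $TC(X)={}_{\Delta(X)}cat(X\times X)={}_{\Delta(X)}cat^{Wh}(X\times X)$, since $\Delta(X)\subseteq X\times X$ is a closed cofibration for any CW space.

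For the third, conditional equality I would rerun the same template but with the group replaced by $G\times G$, the ambient space still $X\times X$ but now carrying the full $G\times G$-action, and the closed invariant subset taken to be $\daleth(X)=(G\times G)\cdot\Delta(X)$. The explicit hypothesis that $\daleth(X)\subseteq X\times X$ is a closed $G\times G$-cofibration is exactly what is needed in order to legitimately invoke Theorem \ref{thm1} with $(G\times G, X\times X,\daleth(X))$ in place of $(G,X,A)$. The conclusion of the theorem is then ${}_{\daleth(X)}cat_{G\times G}(X\times X)={}_{\daleth(X)}cat_{G\times G}^{Wh}(X\times X)$, and the left-hand side is $STC_G(X)$ by Definition \ref{dfi4}. (Lemma \ref{lmm2} is not strictly needed here, since the identification $STC_G(X)={}_{\daleth(X)}cat_{G\times G}(X\times X)$ is the very definition, but it can be cited to make the parallel with Lemma \ref{lmm1} visible.)

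The main obstacle is the cofibration hypothesis in the last item. The diagonal $\Delta(X)\subseteq X\times X$ inherits its cofibration property from the standard CW machinery, but $\daleth(X)$ is the $G\times G$-saturation of $\Delta(X)$, i.e.\ a union of translates glued along their pairwise intersections, and verifying the equivariant homotopy extension property for this saturation is genuinely subtle for general compact Lie $G$. That is precisely why the corollary phrases the third equality as a conditional statement and leaves the unconditional version (for finite $G$) to the work that follows.
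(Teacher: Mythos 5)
Your proposal is correct and is exactly the derivation the paper intends: the corollary is stated without a separate proof precisely because it amounts to feeding the triples $(G, X\times X, \Delta(X))$, $(e, X\times X, \Delta(X))$ and $(G\times G, X\times X, \daleth(X))$ into Theorem \ref{thm1}, using the remark on $\Delta(X)$ being a closed $G$-cofibration (respectively the explicit cofibration hypothesis on $\daleth(X)$), and then identifying the resulting categories with $TC_G(X)$ via Lemma \ref{lmm1} and with $STC_G(X)$ via Definition \ref{dfi4}. Your handling of the conditional third equality, including the observation that it is the cofibration status of $\daleth(X)$ that forces the hypothesis, matches the paper's reasoning.
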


Now let us proceed to the proof that $\daleth(X)\subseteq X\times X$ is a $G\times G$ cofibration for a finite $G$ for which we will use an equivalent formulation.

\begin{prop}\label{2:14}
\label{equivalent definitions}
Let $G$ be a compact Lie group. A compact metrizable $G$-space $X$ is a $G$-ANR if and only if every pair $(Y,X)$ is a closed $G$-cofibration for $Y$ and $X$ metrizable, $X$ closed and invariant in $Y$.
\end{prop}

\begin{prop}[Jaworowski  \cite{jaworowski}]\label{2:15}\label{Jaworowski}
Let $G$ be a compact Lie group.  A compact $G$-space is a $G$-ANR if
and only if for every closed subgroup $H\subseteq G$ the fixed point
set $X^H$ is an ANR.
\end{prop}

To shorten the notation let us denote by $\tilde{X}$ the product $X\times X$. Recall that
$\tilde{X}$ posses a natural action of the group
$\tilde{G}:=G\times G$ induced by the action of $G$ on $X$.

\begin{thm}\label{2:16} Let $G$ be a finite group and $X$ a compact $G$-ANR. Then $\daleth(X)$ is a $\tilde{G}$-ANR.
\end{thm}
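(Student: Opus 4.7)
The plan is to invoke Jaworowski's Theorem \ref{Jaworowski}: since $\daleth(X)$ is a compact $\tilde{G}$-space with $\tilde{G}=G\times G$, it is a $\tilde{G}$-ANR provided that for every closed subgroup $K\subseteq\tilde{G}$ the fixed-point set $\daleth(X)^K$ is an ANR. Writing $H_i:=\pi_i(K)\subseteq G$ for the two projections of $K$, the product form of the $\tilde G$-action on $X\times X$ immediately gives $(X\times X)^K=X^{H_1}\times X^{H_2}$, so
\[
\daleth(X)^K \;=\; \{(x,y)\in X^{H_1}\times X^{H_2} : y\in Gx\}.
\]
Applying Theorem \ref{Jaworowski} to the given compact $G$-ANR $X$, each $X^{H_i}$ is an ANR, and hence $\daleth(X)^K$ sits as a closed subspace of the ANR $X^{H_1}\times X^{H_2}$.

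The crux and main obstacle is to promote this particular closed subspace to an ANR in its own right, since closed subspaces of ANRs need not be ANRs. I would attack this through the proper $\tilde G$-equivariant parametrizing map
\[
\Phi\colon G\times X^{H_1}\longrightarrow X\times X, \qquad (g,x)\longmapsto(x,gx),
\]
whose image landing in $\daleth(X)^K$ is cut out by the closed condition $g^{-1}H_2g\subseteq G_x$. Because $G$ is a compact Lie group and $X$ is compact, there are only finitely many $G$-orbit types on $X$; this makes the preimage $\Phi^{-1}(\daleth(X)^K)$ split into finitely many locally closed pieces indexed by the conjugates of $H_2$ sitting inside the various stabilizers, and on each such piece one obtains a bundle with coset-space fibre in $G$ over an ANR base of the form $X^L$ for a suitable closed subgroup $L\supseteq H_1$, hence itself an ANR. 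Identifying $\Phi$ restricted to this preimage as the orbit map of a proper action of a compact Lie group, the standard fact (a consequence of Palais's slice theorem) that the orbit space of a proper compact Lie group action on an ANR is again an ANR then yields that $\daleth(X)^K$ itself is an ANR, and Jaworowski's criterion gives the theorem.

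The most delicate point I foresee is organising the orbit-type stratification so that the relevant pieces form a closed filtration rather than a merely locally closed decomposition, which is what permits a Hanner-style gluing to assemble $\daleth(X)^K$ from its strata. Controlling the interplay between the $G$-stratification of $X^{H_1}$ by stabilizer type and the continuous family of subgroups $g^{-1}H_2g$ as $g$ ranges over $G$, so that the condition $g^{-1}H_2g\subseteq G_x$ decomposes $\Phi^{-1}(\daleth(X)^K)$ into a genuinely well-behaved closed filtration, is where the bulk of the technical work of the proof should reside.
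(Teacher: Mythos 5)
Your opening moves are sound and agree with the paper's: you reduce via Jaworowski's criterion (Theorem \ref{Jaworowski}), and the identification $\daleth(X)^K=\{(x,y)\in X^{H_1}\times X^{H_2}\colon Gx=Gy\}$ with $H_i=p_i(K)$ is correct. But be aware that the paper proves Theorem \ref{2:16} only under the \emph{additional assumption that $G$ is finite}, and the closely related question for general compact Lie groups is in effect left open (Open problem \ref{2:18}, which via Theorem \ref{equivalent definitions} would follow from the ANR statement you are trying to prove). For finite $G$ the paper writes $\daleth(X)=\bigcup_{g\in G}\daleth(X)_g$ with $\daleth(X)_g=\{(gx,x)\colon x\in X\}$, observes that each $(\daleth(X)_g)^H$ and each intersection $(\daleth(X)_{g_1})^H\cap(\daleth(X)_{g_2})^H$ is a fixed-point set $X^{L}$ for a subgroup $L$ generated by finitely many elements, hence an ANR by Jaworowski, and then applies the union theorem for \emph{finitely many closed} ANR pieces with ANR intersections. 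Your proposal aims at the general compact Lie case, and that is exactly where it breaks down.

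Two steps are not justified. First, $\Phi$ restricted to $\Phi^{-1}(\daleth(X)^K)$ is \emph{not} the orbit map of an action of a fixed compact Lie group: its fibre over $(x,gx)$ is $gG_x\times\{x\}$, a coset of a stabilizer that varies with $x$. These fibres are orbits of a groupoid, not of a group acting on the preimage, so the theorem that orbit spaces of $G$-ANRs are ANRs cannot be invoked as stated; no candidate group with those orbits is exhibited. Second, the orbit-type pieces you propose are only locally closed, and the ANR gluing theorems require either open pieces, or closed pieces whose (multiple) intersections are ANRs, or a verification of local ANR-ness at points of the lower strata; you yourself flag the organisation of a closed filtration as ``where the bulk of the technical work should reside'', i.e.\ the decisive step is deferred rather than carried out. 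Moreover the claim that each stratum is a bundle with coset-space fibre over a base of the form $X^{L}$ is itself unproven: the set $\{g\in G\colon g^{-1}H_2g\subseteq G_x\}$ is in general a union of cosets whose structure changes with $x$. Even specialised to finite $G$, your plan quotients rather than glues, and so omits the check that the intersections of the closed pieces are ANRs --- which is where the paper's actual computation lies. As it stands the proposal is a strategy outline whose crucial steps (and in particular everything that would go beyond the finite case treated in the paper) are missing.
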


\begin{proof}
First observe that $\daleth(X)$ can be represented as the saturation
of $\Delta(X)\subseteq \tilde{X}$ with respect to the action of group
$G_1:=G\times\{e\} \subseteq \tilde{G}$, i.e.
$$\daleth(X)= \{(gx,x)\colon \;\ g\in G, \;x\in X\} $$
Indeed, since $G_1\subseteq G$, $\{(gx,x)\colon \;\ g\in G, \;x\in X\}  \subset \daleth(X)$.
On the other hand any $z=(x_1,x_2)=(g_1x,g_2x)$ can be represented
as $(\tilde{g}x,x)\}$ where $ y=g_2x$ and $\tilde{g}=g_1\,g_2^{-1}$.
This shows that $\daleth(X)\subseteq \{(gx,x)\}$.

Of course $\daleth(X)$ is $\tilde{G}$-invariant closed subset of
$\tilde{X}$ as the  image of compact space $\tilde{G}\times
\Delta(X)$. In view of the Jaworowski theorem (\ref{Jaworowski}) it
is enough  to show that for every $\tilde{H}\subseteq \tilde{G}$ the
space $\daleth(X)^{\tilde{H}}$ is an ANR.

Let $h=(h_1,h_2) \in \tilde{H}$. A point $(gx,x)$ belongs to
$\daleth(X)^h$ (or equivalently to $X^{\{h\}}$, $\{h\}$ the cyclic
group generated by $h$) if and only if $h(gx,x) = (gx,x)$. The latter is equivalent to $h_2 x=x$ and $h_1gx=gx$. The first equality gives $x\in X^{h_2}$, and the second $ gx \in X^{h_1}$.
Since $G_{gx}= g G_x g^{-1}$ the latter means that $x\in X^{g^{-1}h_1g^{-1}}$.
Consequently $(gx,x)\in \daleth(X)^h$ if and only if $x\in X^{h_2} \cap X^{g^{-1}h_1
g}$.

Next note that $X^{h} \cap X^{h'} = X^{\{h,h'\}}$, where $\{h,h'\}$
is a subgroup generated by $h$ and $h'$.

\zz{Indeed since $h\subseteq \{h,h'\}$, $h'\subseteq \{h,h'\}$,
$X^{\{h,h'\}} \subseteq X^{h}$ and $X^{\{h,h'\}} \subseteq X^{h'}$,
thus $ X^{\{h,h'\}} \subseteq X^{h} \cap X^{h'}$.

Conversely, if $ x\in  X^{h} \cap X^{h'}$ then $x\in X^{h_1^{i_1^1}
h_2^{i_2^1} \,\cdots \, h_1^{i^1_r}h_2^{i^2_r}}$ for any word
$h_1^{i_1^1} h_2^{i_2^1} \,\cdots \, h_1^{i^1_r}h_2^{i^2_r}$. This
means that $x\in X^{\{h,h'\}}$, thus $X^{h} \cap X^{h'} \subset
X^{\{h,h'\}}$.}

From it follows that  given $g\in G$  for  $\daleth(X)_g:=
\{(gx,x)\}, \; x\in X$  and $h=(h_1',h_2)\in \tilde{H}$ the fixed
point set $\daleth(X)_g^h$ is equal to $ \daleth(X)_g^h
=X^{\{h',h_2\}} $,  where $h'=gh_1g^{-1}$. But such a set is an ANR,
because $X$ is a $G$-ANR.

Observe next that
\begin{equation}\label{intersection}
\daleth(X)_{g_1}^h \cap \daleth(X)_{g_2}^h = X^{h_2}\cap X^{h_2}
\cap X^{g_1^{-1}h_1g_1} \cap X^{g_2^{-1}h_1g_2} =X^{\{h',h'',
h_2\}}\,,
\end{equation}
with $ h'=g_1^{-1}h_1g_1$ and $h''=g_2^{-1}h_1g_2$. Consequently, it is an ANR.

Since $\daleth(X)= {\underset{g\in G}\cup} \daleth(X)_g$, $G$ is
finite, we know that $\daleth(X)^h$ is a finite union of ANRs
such that the intersections of each two of them are  ANRs.  From the
well known property of ANRs: \emph{"$X$, $Y$, $X\cap Y$ are ANRs
implies that $X\cup Y$ is an ANR "} it follows that $\daleth(X)^h$
is an ANR.

Now lets take $h=(h_1,h_2)$, and $h'=(h_1',h_2')$. For a given $g\in G$
\begin{equation}\label{second intersection}
\daleth(X)_{g}^h \cap \daleth(X)_{g}^{h'} =  X^{h_2}\cap X^{h_2'}
\cap X^{g h_1g^{-1}} \cap X^{gh_1'g^{-1}} =X^{\{h_2,h_2', g
h_1g^{-1}\,,gh_1'g^{-1}\}}
\end{equation}
Observe that for any $\tilde{G}$-subset $A$  of $\tilde{X}$ we have
$A^H = {\underset{h\in H}\cap} A^h $.

Now let $h^1=(h^1_1,h^1_2), \,\dots,\, h^s=(h^s_1,h^s_2)$ be all
elements of $H\subset \tilde{G}$. For a given $g\in G$
$$(\daleth(X)_g)^H= {\underset{h\in H}\cap}(\daleth(X)_g)^h=
X^{\{h^1_2,h^2_2, ..., h^s_2,\, gh^1_1g^{-1}, ...,\,
gh^s_1g^{-1}\}}$$
and consequently this set is an ANR, since $X$ is a $G$-ANR.

Finally, by the same argument $\daleth(X)^H = {\underset{g\in
G}{\cup}} (\daleth(X)_g)^H$ is an ANR, because for two $g_1,\,g_2
\in G$ we have
$$(\daleth(X)_{g_1})^H \cap (\daleth(X)_{g_2})^H=X^{\{h^1_2,h^2_2, ..., h^s_2, \,g_1h^1_1g_1^{-1}, ...\,,\, g_1h^s_1g_1^{-1}, g_2h^1_1g_2^{-1}, ...\,,
\,g_2h^s_1g_2^{-1}\}} $$ This shows that $(\daleth(X)_{g_1})^H \cap
(\daleth(X)_{g_2})^H$ is an ANR and  so is the union
${\underset{g\in G}{\cup}} (\daleth(X)_g)^H = \daleth(X)^H$.
\end{proof}

\begin{rem}\label{2:17} Let $H\subseteq \tilde{G}= G\times G$, $p_1:G\times G\to
G$, $p_2:G\times G \to G$ the projections onto the corresponding
coordinates. Set $H_1:=p_1(H)$, $H_2:=p_2(H)$ and let $ \hat{H}:=
H_1\times H_2\subset \tilde{G}$.

Since the subgroup $\hat{H}$ contains $H$ we have an
inclusion
\begin{equation}\label{inclusion}
 A^{\hat{H}} \subseteq A^H
\end{equation}
for any $\tilde{G}$ subset $A$ of
$\tilde{X}$. Since the inclusion $H\subseteq \hat{H}$ is strict in general, the inclusion \ref{inclusion} is in general strict as well.
\end{rem}

\begin{prob}\label{2:18}
Is it true that $$\daleth (X)\subseteq X\times X$$ is always a closed $G\times G$-cofibration for a compact $G$-CW complex $X$ and a compact Lie group $G$?
\end{prob}

The above problem seems be  difficult in general.

\subsection{Bounds for the invariant topological complexity}

We start with a product formula for the invariant and equivariant topological complexity.

\begin{thm}
Let $X$ be a $G$-space and $Y$ a $H$-space. We consider $X\times Y$ as a $G\times H$-space. Then
$$TC_{G\times H}(X\times Y)  \leqslant TC_G(X)+TC_H(Y)-1$$
Moreover,  if $\daleth(X)\subseteq X\times X$ is a $G\times
G$-cofibration and $\daleth(Y)\subseteq Y\times Y$ is a $H\times
H$-cofibration then
$$TC^{G\times H}(X\times Y)  \leqslant TC^G(X)+TC^H(Y)-1$$
\end{thm}

\begin{proof}
Since $\Delta(X\times Y) = \Delta(X)\times \Delta(Y)$ and
$\daleth(X\times Y)= \daleth(X)\times \daleth(Y)$ this is a direct
consequence of Corollary \ref{1:6}.
\end{proof}

\begin{cor}\label{prod}
Let $X$ and $Y$ be any $G$ spaces. Then for $X\times Y$ considered
as a $G$-space with the diagonal action we have that
$$TC_G(X\times Y)  \leqslant TC_G(X)+TC_G(Y)-1\,.$$
\end{cor}

\begin{proof}
Since $\Delta(X\times Y) = \Delta(X)\times \Delta(Y)$ it is a
 consequence of theorem \ref{1:5}.
\end{proof}

\begin{rem}
The above corollary is not true for $TC^G$ -- let $X:=\Sph^1$ be a free $G:=\Sph^1$-space. Since the action is free we obtain from theorem \ref{thm0} that $TC^G(X) = TC(X/G)=  1$. On the other hand the space $X\times X$ is a free $G$-space via the diagonal action so that $TC^G(X\times X) = TC(X\times X / G)\cong TC(\Sph^1) = 2$.
\end{rem}

\begin{rem}
Note that the corollary \ref{prod} generalizes Theorem 4.2 of \cite{gonzales} where a product formula for $TC_G$ is proved under more restrictive hypotheses.
\end{rem}


From our point of view one of the most important properties of the invariant topological complexity is that it is indeed finite for a large family of $G$ spaces $X$.
We have an obvious inequality
$$TC(X)\leqslant cat(X\times X).$$
We will show that it passes to the equivariant case. For completeness let us first recall

\begin{prop}[\cite{colman-grant}, Proposition 5.6]\label{3:2}
If $X$ is $G$-connected then $TC_G(X) \leqslant cat_{G}(X\times
X)$.
\end{prop}

We give a similar result concerning the invariant topological complexity.

\begin{prop}\label{3:3}
If $X$ is $G$-connected then $TC^G(X) \leqslant _{O\times
O}cat_{G\times G}(X\times X)$ where $O = G\cdot x_0$ for some
element $x_0$ in $X$.
\end{prop}

\begin{proof}
Let $U$ be a set $G\times G$-compressible into $O\times O$. We have
a $G\times G$-homotopy $F\colon U\times I\rightarrow X\times X$ such
that $F\colon id_U\simeq c$ where $c(U)\subseteq O\times O$. Let $H
= G_{x_0}\times G_{x_0}$. We know that $p((PX\times
_{\daleth(X)}PX)^H ) = (X\times X)^H$ which follows from the $G$-connectivity of $X$ hence $p(\gamma,\delta) = (x_0,x_0)$. Then we
define $s\colon U\rightarrow PX\times_{\daleth (X)}PX$ by
$s(y_0,y_1) = (g_0,g_1)\cdot (\gamma,\delta)$ whenever $c(y_0,y_1) =
(g_0,g_1)\cdot (x_0,x_0)$. Now note that $p\circ s \simeq c \simeq
id_U$.
\end{proof}

\begin{rem}
The above theorem allows us to show that $TC^G(X)$ is in many cases
finite. In particular, if $x_0\in X^G$ and $X$ is $G$-connected then
 $_{\{x_0\}\times \{x_0\}}cat _{G\times G} (X\times X) \leqslant
2\hbox{}_{\{x_0\}}cat_{G} (X) - 1 = 2cat_G (X) - 1$ by Theorem
\ref{1:6}.
\end{rem}

Equivariant and invariant topological complexity share some basic homotopical properties.

\begin{prop}\label{3:4}
Let $X$ $G$-dominates $Y$, that is there are $f\colon X\rightarrow
Y$ and $f^\prime\colon Y\rightarrow X$ such that $f\, f^\prime\simeq
id_Y$ are $G$-homotopic. Then
$$TC_G(X)\geqslant TC_G(Y)\text{ and } TC^G(X) \geqslant TC^G(Y).$$
\end{prop}

\begin{proof} The part concerning $TC_G(X)$ can be found in \cite[Theorem 5.2]{colman-grant}. 

For the proof for $TC^G(X)$ let $H\colon f\circ f^\prime\simeq id_Y$ be
the $G$-homotopy. Note that then
$$H\times H\colon (X\times X,\daleth(X))\times I \rightarrow (Y\times Y,\daleth(Y))$$ is a $G\times G$-homotopy between
$(f\times f)\circ (f^\prime\times f^\prime)$ and $id_{(Y\times Y,
\daleth (Y))}$. Now the assertion follows from \ref{1:4}.
\end{proof}

\begin{cor}\label{3:5}
For a $G$-set $X$ we have
$$TC(X^G)\leqslant TC_G (X)\text{ and } TC(X^G)\leqslant TC^G(X).$$
\end{cor} 

\begin{proof} The part concerning $TC_G(X)$ follows from
\cite[Proposition 5.3]{colman-grant}.

For the proof for $TC^G(X)$ first note that $TC^G(X^G) = TC(X^G)$ and 
$$\daleth(X)^{G\times G} = \daleth (X^G) = (G\times G)
\Delta(X^G)= \Delta(X^G)$$ therefore from Proposition \ref{1:16} we obtain
$$TC^G(X^G) = _{(G\times G) \Delta(X^G)}cat_{G\times G}(X^G\times
X^G) \leqslant   \hbox{}_{\daleth (X)}cat_{G\times G}(X\times X) =
TC^G(X)$$
which ends the proof.
\end{proof}

\section{Examples of calculations}

We end this article with calculations of $TC^G(X)$ in some basic examples.

\begin{exa}
Let $G$ act on itself by left translations. The action of $G$ is free and therefore from theorem \ref{2:9} we get that
$$TC^G(G) = TC(G / G) = TC(\ast) = 1$$
which is in contrast to the case of equivariant topological
complexity where we have that $TC_G(G) = cat(G)$ (comp.
\cite{colman-grant}, Theorem 5.11).
\end{exa}

\begin{exa}
Let $\mathbb Z / 2 = \{1 , \tau \}$ act on $\Sph^n$, $n\geqslant 1$ by reflecting the last coordinate. Note that for $n=1$ the set $(\Sph^1)^{\mathbb Z /2}$
is disconnected so that $TC_{\mathbb Z /2}(\Sph^1) = TC^{\mathbb Z /2} (\Sph^1) = \infty$. If $n>1$ then $\Sph^n$ is $\mathbb Z /2$ connected so
that $$TC^{\mathbb Z /2}(\Sph^n)\leqslant cat_{\mathbb Z /2 \times \mathbb Z /2} (\Sph^n\times \Sph^n)\leqslant 2cat_{\mathbb Z /2}(\Sph^n) - 1 = 3$$
 by theorem \ref{3:3}. On the other hand since $(\Sph^n)^{\mathbb Z /2}\cong \Sph^{n-1}$ Corollary \ref{3:5} implies that $TC^{\mathbb Z /2}(\Sph^n)\geqslant TC(\Sph^{n-1}) = 3$ for $n$ odd.

For an even $n$ let $U_1\subseteq \Sph^n\times \Sph^n$ be defined
as follows $U_1 = \{(x,y)\in(\Sph^n)^2\colon x\neq -y\text{ if
}x,y\in\Sph^{n-1}\}$. Then for each $(x_1,x_2)\in U_1$ there is
a unique shortest path $s'(x_1,x_2)$ joining $x_1$ or $\tau x_1$ and
$x_2$ or $\tau x_2$ in the upper hemisphere. Let $s_1(x_1,x_2)=
(\alpha_1 s|_{[0,\tfrac{1}{2}]}, \alpha_2s|_{[\tfrac{1}{2} , 1]})$
in case we were joining $\alpha_1x_1$ with $\alpha_2x_2$ for
$\alpha_i\in\mathbb Z / 2$.

Let $U_2\subseteq \Sph^n\times \Sph^n$ be defined as follows. We set $V_2
:= \{(x,y)\in(\Sph^n)^2\colon x, y\in\Sph^{n-1},\; x\neq
y\}$.  Now $V_2$ has a small $\mathbb Z/2\times \mathbb Z/2$ invariant open
neighborhood $U_2$ in $\Sph^n\times \Sph^n$ such that the projection
$\pi\colon U_2\rightarrow V_2$ into the equator $\Sph^{n-1}$ is
$\mathbb Z /2\times \mathbb Z /2$
 equivariant deformation retraction. We define for each $(x_1,x_2)$ a path from $x_1$ to $x_2$ as follows. First choose a non vanishing vector field $\nu$ on $\Sph^{n-1}$. The path $s'_2(x_1,x_2)$ consists of four parts. First by the shortest path we move $x_1$ to $\pi(x_1)$, then using the shortest path we move $\pi(x_1)$ to $-\pi(x_2)$ and using the vector field $\nu$ we move $-\pi(x_2)$ to $\pi(x_2)$ using a spherical arch defined by $\nu (\pi(x_2))$ and we end by moving through the shortest path $\pi(x_2)$ to $x_2$. We obtain $s$ from $s'$ by cutting it into two parts.

As it can be easily checked these two sets satisfy the definition of the invariant topological complexity and prove that $TC^{\mathbb Z / 2} (\Sph^n) = 2$ for $n$ even.

Note that we have $TC_{\mathbb Z / 2} (\Sph^n) = 3$ for $n>1$
as shown in \cite{colman-grant},Example 5.9.
\end{exa}

\end{document}